\definecolor{myurlcolor}{rgb}{0.6,0,0}
\definecolor{mycitecolor}{rgb}{0,0,0.8}
\definecolor{myrefcolor}{rgb}{0,0,0.8}
\renewcommand*{\backref}[1]{}
\newcommand{\x}{[x]}
\newcommand{\vol}{{\rm Vol}}
\newcommand{\Prob}{{\rm Prob}}
\newcommand{\Pb}{{\rm P}}
\newcommand{\Pbm}{{\rm P}^m}
\newcommand{\Qbm}{{\rm Q}^m}
\newcommand{\WM}{{\rm W}}
\newcommand{\BB}{{\mathds B}}
\renewcommand{\SS}{{\mathds S}}
\newcommand{\e}{{\rm e}}
\newcommand{\X}{\tilde{X}}
\newcommand{\Y}{\tilde{Y}}
\newcommand{\Z}{\tilde{Z}}
\newcommand{\s}{\tilde{S}}
\newcommand{\C}{\mathscr C}
\newcommand{\Ell}{\tau}
\renewcommand{\(}{\left(} 
\renewcommand{\)}{\right)}
\DeclarePairedDelimiter\floor{\lfloor}{\rfloor}
\newtheorem{theorem}{Theorem}[section]
\newtheorem{proposition}{Proposition}[section]
\theoremstyle{definition}
\newtheorem{definition}{Definition}[section]
\theoremstyle{remark}
\def\namedlabel#1#2{\begingroup
    #2%
    \def\@currentlabel{#2}%
    \phantomsection\label{#1}\endgroup
}
\begin{document}
\title{$p$\,-Adic Brownian Motion is a Scaling Limit}
\author{David Weisbart}\address{Department of Mathematics\\University of California, Riverside}\email{weisbart@math.ucr.edu}

\begin{abstract} 
A $p$-adic Brownian motion is a continuous time stochastic process in a $p$-adic state space that has a Vladimirov operator as its infinitesimal generator.  The current work shows that any such process is the scaling limit of a discrete time random walk on a discrete group.  Earlier work required the exponent of the Vladimirov operator to be in $(1, \infty)$, and the convergence was the weak convergence of probability measures on the Skorohod space of paths on a compact time interval.  The current approach simplifies the earlier approach, allows for any positive exponent, eliminates the restriction to compact time intervals, and establishes some moment estimates for the discrete time processes that are of independent interest.
\end{abstract}

\maketitle

\tableofcontents

%

\thispagestyle{empty}

\section{Introduction}

The fundamental solution to the diffusion equation in the real setting induces the Wiener measure $\WM$ on $C([0,\infty)\colon\mathds R)$, the space of continuous real-valued functions on $[0,\infty)$.  The probability space for Brownian motion is the pair $(C([0,\infty)\colon\mathds R), \WM)$.  The continuous paths form a closed subset of $D([0,\infty)\colon\mathds R)$, the Skorohod space of real-valued c\`{a}dl\`{a}g functions on $[0,\infty)$ endowed with the Skorohod metric.  There is a sequence of probability measures $(\Pb^m)$ on $D([0,\infty)\colon\mathds R)$ so that each $\Pbm$ is the probability measure for a discrete time random walk on a grid in $\mathds R$, and $(\Pbm)$ converges weakly to $\WM$.  The random walks associated to $(\Pbm)$ arise from the scaling of a single discrete time process on a discrete group.  In this precise sense, Brownian motion in $\mathds R$ is a scaling limit.  There are direct analogs of Brownian motion for paths in $\mathds Q_p$, the $p$-adic numbers, that are functions of a real time parameter.  Do these real time $p$-adic Brownian motions also arise as scaling limits?  An earlier study partially answered this question \cite{BW}, but only treated Brownian motions with Vladimirov operators with exponents greater than 1 as their infinitesimal generators.  Furthermore, the convergence that this study established is, for any positive $T$, the weak convergence of probability measures on the Skorohod space $D([0,T]\colon\mathds Q_p)$ of $\mathds Q_p$-valued paths.  

The current work improves the earlier study \cite{BW}, simplifies the approach by reducing all calculations that involve scaled processes to calculations that involve a single discrete time stochastic process that has a discrete quotient group of $\mathds Q_p$ as its state space, and establishes moment estimates for the discrete time processes that are of independent interest.  It treats any Vladimirov operator with a positive exponent, and the convergence it establishes is the weak convergence of probability measures on $D([0,\infty)\colon\mathds Q_p)$.  The current approach should be robust enough to permit an extension to the very general type of non-Archimedean analogs of Brownian motion that Varadarajan introduced \cite{var97}. 

Authors have traditionally referred to the type of processes that the present article studies as $p$-adic ultrametric diffusion processes \cite{ABKO:JPA:2002, Bik:UAA:2010}.  However, such processes may potentially be more general than the processes that the present work studies.  Reference to the \emph{real time} $p$-adic Brownian motion as a $p$-adic Brownian motion agrees with the general practice of referring to a \emph{real time} analog of Brownian motion in a space other than $\mathds R$ as a Brownian motion in the space, but it introduces some ambiguity with respect to the parameter space for those familiar with $p$-adic mathematical physics.  In particular, Bikulov and Volovich introduced in their seminal work \cite{BV:IzvMath:1997} a very different stochastic process, parameterized by a $p$-adic time variable, that they also referred to as $p$-adic Brownian motion.  In some respects, such a process is the true analog of real Brownian motion.  To clarify, the $p$-adic Brownian motion that the current paper studies is a stochastic process with a real time parameter and a $p$-adic state space.

The two principle ideas that motivate the study of $p$-adic mathematical physics are the idea that non-Archimedean physical models could describe the observed ultrametricity in certain complex systems and the idea that spacetime could have a non-Archimedean structure at extremely small distance and time scales.  Ultrametric structures in spin glasses were already implicit in Parisi's early works \cite{Parisi:PRL:1979, Parisi:JPA:1980}.  Avetisov, Bikulov, and Kozyrev \cite{Avetisov_Bikulov_Kozyrev:JPA:1999} and Parisi and Sourlas \cite{Parisi_Sourlas:JPA:1999} independently proposed $p$-adic models to describe replica symmetry breaking.  The understanding that there are fundamental limitations on physical measurements below the Planck scale motivated Volovich to introduce the \emph{Volovich hypothesis} \cite[Chapter 6]{V2011}, the idea that physical theories that deal with such ultramicroscopic measurements should involve non-Archimedean local fields \cite{vol1}.  The review of Dragovich, Khrennikov, Kozyrev, and Volovich \cite{drag} and the more recent work of Khrennikov, Kozyrev, and Z\'{u}\~{n}iga-Galindo \cite{KKZ} both give excellent accounts of the development of $p$-adic mathematical physics and list a variety of areas of potential application for the results of the current paper.  A significant application of $p$-adic Brownian motion to physics that has experimental confirmation is the utilization by Avetisov, Bikulov, and Zubarev \cite{ABZ:ProcSteklov:2014} and by Bikulov and Zubarev \cite{BZ:Physica:2021} of the Vladimirov equation in the description of protein molecule dynamics.  The current work could also permit a study from a discrete perspective of the work of Khrennikov and Kochubei \cite{KK:JFAA:2018} on a $p$-adic analog of the porous medium equation, as well as its generalization by the previous two authors with Antoniouk \cite{AKK:JPDOA:2020}.  It would also be of interest to extend the finite approximation of non-Archimedian quantum systems \cite{BDW} to a discrete time setting.

Taibleson introduced the idea of a pseudo-differential operator in the context of a local field \cite{Taib}.  Saloff-Coste studied such operators \cite{SC1} and generalized them to the setting of local groups \cite{SC2}.  The seminal works of Vladimirov and Volovich on $p$-adic quantum systems \cite{VV89a, VV89b} and Vladimirov's works in which he studied the \emph{Vladimirov operator} and computed its spectrum \cite{Vlad88, Vlad90} dramatically increased interest in analogs of diffusion equations in the $p$-adic setting.  Following the work of Kochubei \cite{koch92} and Albeverio and Karwowski \cite{alb}, Varadarajan introduced the idea of a Brownian motion in a state space that is a finite dimensional vector space over a division ring which is finite dimensional over a local field of arbitrary characteristic \cite{var97}.  The current paper takes the perspective of Varadarajan as its starting point, but restricts to the setting of $p$-adic state spaces.

Section~\ref{Sec-BM2} presents some necessary background for the current work and introduces to a general readership a framework for studying scaling limits in both the real and $p$-adic settings.  Section~\ref{Sec-Primitive} introduces the state space of the primitive process and some properties of the primitive process and its refinements.  It establishes in Theorem~\ref{LOM:Theorem:PrimitiveMoments} upper bounds for low order moments of the primitive process.  Section~\ref{Sec-ScalingLimits-Revised} introduces a family of scaled processes in $\mathds Q_p$ and uses Theorem~\ref{LOM:Theorem:PrimitiveMoments} to determine uniform upper bounds for the scaled processes that imply that the family of measures associated to the scaled processes is uniformly tight.  Uniform tightness of the measures and the convergence of the finite dimensional distributions of the scaled processes imply the main result of the paper, Theorem~\ref{secScalingLimit:Theorem:MAIN}.

\section{Background and Framework}\label{Sec-BM2}

\subsection{Path spaces}

Take $I$ to be either $[0, \infty)$ or $\mathds N_0$, the natural numbers with $0$. For any Polish space $\mathcal S$, denote by $F(I\colon \mathcal S)$ the set of all paths in $\mathcal S$ with domain $I$.  Two spaces of paths are of particular importance, the set $C([0,\infty) \colon \mathcal S)$ of continuous paths in $\mathcal S$ with domain $[0,\infty)$ equipped with the topology of uniform convergence on compacta, and the \emph{Skorohod space} $D([0,\infty)\colon \mathcal S)$ of c\`{a}dl\`{a}g functions from $[0,\infty)$ to $\mathcal S$ equipped with the Skorohod metric \cite{bil1}.  Probability measures on the second space are the central objects of the current study.  Denote by $\Omega(I)$ any of the spaces $F(I\colon \mathcal S)$, $C([0,\infty) \colon \mathcal S)$, or $D([0,\infty)\colon \mathcal S)$ and by $\mathcal B(\mathcal S)$ the set of Borel subsets of $\mathcal S$.  

For any natural number $N$, an \emph{epoch} of length $N$ in $I$ is a strictly increasing finite sequence in $I$ with domain $\{0, \dots, N\}$ such that $e(0)$ is equal to $0$.  A \emph{route}, $U$, of length $N$ in $\mathcal S$ is a finite sequence from $\{0, \dots, N\}$ to $\mathcal B(\mathcal S)$.  A \emph{history}, $h$, of length $N$ for $\mathcal S$-valued paths with domain $I$ is a finite sequence of ordered pairs in $I\times \mathcal B(\mathcal S)$ with $N+1$ places and with the property that if \[h = \((t_i, U_i)\)_{i\in \{0, \dots, N\}},\] then $(t_i)$ is an epoch in $I$ and $(U_i)$ is a route in $\mathcal B(\mathcal S)$.  For any history $h$, denote by $e_h$, $\ell(h)$, and $U_h$ the associated epoch, length, and route of $h$, respectivly.  Take $H(I, \mathcal S)$ to be the set of all histories for $\mathcal S$-valued paths with domain $I$.  On specifying a path space $\Omega(I)$, take $\C$ to be defined for any $h$ in $H(I, \mathcal S)$ by \begin{align*}\C(h) &= \left\{\omega \in \Omega(I)\colon \forall i\in \mathds N_0\cap [0, \ell(h)], \; \omega(e_h(i)) \in U_h(i)\right\}.\end{align*}  For any history $h$, $\C(h)$ is a \emph{simple cylinder set}.  The set $\C(H(I, \mathcal S))$ is the \emph{set of all simple cylinder sets of $\Omega(I)$} and is a $\pi$-system, but not an algebra. The \emph{set of cylinder sets} is the $\sigma$-algebra generated by $\C(H(I, \mathcal S))$---its elements are the \emph{cylinder sets}.

For any $t$ in $I$, take $Y_t$ to be the function that acts on any path $\omega$ in $\Omega(I)$ by \begin{equation}\label{First:DefofY}Y_t(\omega) = \omega(t).\end{equation}  Take $Y$ to be the function \[Y\colon I\times \Omega(I) \to \mathcal S \quad \text{by}\quad Y(t, \omega) = \omega(t).\] For any probability measure $\Pb$ on the $\sigma$-algebra of cylinder sets of $\Omega(I)$, the pair $(\Omega(I), \Pb)$ is a \emph{path space with probability measure $\Pb$}.  For any $t$ in $I$, $(\Omega(I), \Pb, Y_t)$ is a random variable and $(\Omega(I), \Pb, Y)$ is a stochastic process.  The \emph{finite dimensional distributions} for $(\Omega(I), \Pb, Y)$ are the probabilities for the simple cylinder sets of $\Omega(I)$.  The convergence of stochastic processes that this paper investigates involves processes that have the same state and parameter spaces, but different probability measures.  The level of precision in referring to stochastic processes as triples facilitates reframing the study of the convergence of a sequence of stochastic processes as the study of the convergence of a sequence of probability measures on the same Polish space.


\subsection{Abstract processes and models}

The construction of a probability measure on a space of paths typically involves for each $N$ in $\mathds N_0$ and each epoch $e$ of length $N$ the construction of a premeasure on the $\pi$-system of Borel subsets of $\mathcal S^{N}$ that are products of Borel sets in each component.  If this family of premeasures satisfies the Kolomogorov consistency conditions, then the Kolmogorov extension theorem guarantees that there is a unique probability measure $\Pb$ on the cylinder sets of $F(I\colon \mathcal S)$ that agrees with the given premeasures \cite{bil1}.  It is helpful to initially view the premeasures as being the finite dimensional distributions of a stochastic process, even before it is known that there is a stochastic process with the given finite dimensional distributions.  After determining that there is such a stochastic process, verification of certain moment estimates for the process will guarantee that the process has a version in either $C(I\colon \mathcal S)$ or $D(I\colon \mathcal S)$ \cite{bil1, cent}.

Refer to the construction of the finite dimensional distributions for a stochastic process as the construction of the \emph{abstract stochastic process} $\tilde{Y}$.  Refer to the construction of the law for a random variable without actually specifying its domain as the construction of an \emph{abstract random variable} $\tilde{X}$.  A model for the abstract random variable $\tilde{X}$ is a random variable with the same law as $\tilde{X}$.  A model for the abstract stochastic process $\tilde{Y}$ is a stochastic process with the same finite dimensional distributions as $\tilde{Y}$.  To distinguish the probabilities associated with abstract random variables and stochastic processes from the probabilities associated with their models, denote by ${\rm Prob}(\tilde{X}\in A)$ the probability that $\tilde{X}$ takes a value in some Borel set $A$, and use an analogous notation for $\tilde{Y}$.


\subsection{Brownian motion in $\mathds R$}

For any $D$ in $(0, \infty)$, the equation \begin{equation}\label{Intro:DiffusionEquation}\frac{\partial u}{\partial t} = \frac{D}{2}\frac{\partial^2u}{{\partial x}^2}\end{equation} is the diffusion equation with diffusion constant $D$.  Its fundamental solution is the function $\rho$ that is given for any pair $(t,x)$ in $(0,\infty)\times \mathds R$ by \begin{equation}\rho(t,x) = \frac{1}{\sqrt{2\pi Dt}}\exp\left(-\tfrac{x^2}{2Dt}\right).\end{equation}  The set $\{\rho(t, \cdot)\colon t\in \mathds R_+\}$ forms a semigroup under the convolution operation, a \emph{convolution semigroup of probability density functions}.  There is a unique probability measure $\WM$ on $C([0, \infty)\colon\mathds R)$ so that for any history $h$, and with $x_0$ taken to be equal to $0$, \begin{align}\label{Framework:EQ:FormulaFDD}\WM(\mathcal C(h)) = \int_{U_h(1)}\cdots \int_{U_h(\ell(h))} \prod_{i\in\{1, \dots, \ell(h)\}}\rho(e_h(i)-e_h(i-1), x_i-x_{i-1})\,{\rm d}x_1\cdots{\rm d}x_{\ell(h)}\end{align} if $0$ is in $U_h(0)$ and is equal to $0$ otherwise.  In studying questions about the approximation of Brownian motion by discrete time random walks, it is helpful to view the measure $\WM$ as being a measure on $D([0, \infty)\colon\mathds R)$ that gives full measure to the closed subset $C([0, \infty)\colon\mathds R)$.


\subsection{The additive group $\mathds Q_p$}

See the books by Gouv\^{e}a \cite{Gou} and by Ramakrishnan and Valenza \cite{Ram} for further background on the $p$-adic numbers that this subsection summarizes. Refer to the book by Vladimirov, Volovich, and Zelenov \cite{vvz} for both an introduction to $p$-adic mathematical physics and for many helpful examples of integration over $\mathds Q_p$.

For any prime $p$, denote by $|\cdot|_p$ the $p$-adic absolute value on $\mathds Q$ and by $\mathds Q_p$ the field of $p$-adic numbers, the analytic completion of $\mathds Q$ with respect to the metric induced by $|\cdot|_p$. Denote by $\BB_k(x)$ and $\SS_k(x)$ the sets \[\BB_k(x) = \{z\in\mathds Q_p\colon |x-z|_p \leq p^k\} \quad \text{and}\quad \SS_k(x) = \{z\in\mathds Q_p\colon |x-z|_p = p^k\},\] and by $\mathds Z_{p}$ the \emph{ring of integers}, the set $\BB_0(0)$.  Take $\mu$ to be the unique Haar measure on the locally compact Hausdorff abelian group $(\mathds Q_{p}, +)$ that gives $\mathds Z_{p}$ unit measure.  Compress notation by henceforth suppressing $p$ in the notation for $|\cdot|_p$, and wherever else it is unambiguous to do so.

For any $x$ in $\mathds Q_{p}$, there is a unique function $a_x$ with \begin{equation}\label{Def:ax}a_x\colon\mathds Z\to\{0, 1, 2, \dots, p-1\}\quad {\rm and}\quad  x = \sum_{k\in\mathds Z}a_x(k)p^k.\end{equation}  Denote respectively by $\{x\}$ and $\chi$ the rational number and the additive character given by \[\{x\} = \sum_{k<0} a_{x}(k)p^k \quad {\rm and}\quad \chi(x) = e^{2\pi{\sqrt{-1}}\{x\}}.\]  The Fourier transform $\mathcal F$ and inverse Fourier transform $\mathcal F^{-1}$ are unitary operators on $L^2(\mathds Q_{p})$ that are given for any $f$ in $L^2(\mathds Q_p)$ by \[(\mathcal Ff)(x) = \int_{\mathds Q_{p}}\chi(-xy)f(y)\,{\rm d}\mu\!\(y\) \quad{\rm and}\quad (\mathcal F^{-1}f)(y) = \int_{\mathds Q_{p}}\chi(xy)f(x)\,{\rm d}\mu(x),\] where the integrals are improper integrals for functions that are in $L^2(\mathds Q_p)\smallsetminus L^1(\mathds Q_p)$.

\subsection{Brownian motion in $\mathds Q_p$}

For further background on parabolic equations over non-Archimedean fields and the theory of $p$-adic diffusion, refer to the books by Kochubei \cite{koch01} and Z\'{u}\~{n}iga-Galindo \cite{Zun1}.  The latter reference discusses diffusion processes of a more general type than those to be presently studied. 

Fix $b$ to be in $(0,\infty)$ and denote by $SB(\mathds Q_{p})$ the Schwarz-Bruhat space of compactly supported, locally constant, $\mathds C$-valued functions.  Take ${\mathcal M}$ to be the multiplication operator that acts on $SB(\mathds Q_{p})$ by \[({\mathcal M}f)(x) = |x|^bf(x).\] Denote by $\Delta^\prime_b$ the unique self-adjoint extension of the essentially self-adjoint operator that acts on any $f$ in $SB(\mathds Q_{p})$ by \[(\Delta^\prime_b f)(x) = \big(\mathcal F^{-1}{\mathcal M}\mathcal Ff\big)\!(x).\]  For any $\mathds C$-valued function $g$ with domain $(0, \infty)\times \mathds Q_{p}$ and any $t$ in $(0, \infty)$, take $g_t$ to be the function that is defined for every $x$ in $\mathds Q_p$ by \[g_t(x) = g(t,x).\]  Denote by $\mathcal D(\Delta_b)$ the set of all such $g$ so that for all $t$ in $(0, \infty)$, $g_t$ is in the domain of $\Delta^\prime_b$.  Take $\Delta_b$ to be the \emph{Vladimirov operator with exponent $b$} that acts on any $g$ in $\mathcal D(\Delta_b)$ by \[(\Delta_b g)(t,x) =  (\Delta_b g_t)(x).\]  Similarly extend the Fourier transform to act on $\mathds C$-valued functions on $(0, \infty)\times \mathds Q_{p}$ that are square integrable for each positive $t$ and the operator $\frac{{\rm d}}{{\rm d}t}$ to act on $\mathds C$-valued functions on $(0, \infty)\times \mathds Q_{p}$ that for any $x$ in $\mathds Q_p$ are differentiable in the $t$ variable.

Fix $\sigma$ to be in $(0, \infty)$.  The function $\rho$ that is for any $(t,x)$ in $(0, \infty)\times\mathds Q_p$ given by \begin{equation}\label{ScalLim:Equation:QpDiff}\rho(t,x) = \left(\mathcal F^{-1}\e^{-\sigma t|\cdot|^b}\right)\!(x)\end{equation} is the fundamental solution to the pseudo-differential equation \begin{equation}\label{padicDiffusionEquation}\dfrac{{\rm d}u}{{\rm d}t} = -\sigma\Delta_b u.\end{equation} A minor modification of Varadarajan's arguments \cite{var97} shows that \begin{equation}\label{ScalLim:Equation:Qppdf}\rho(t,x) = \sum_{k\in \mathds Z} p^k\left(\e^{-\sigma tp^{kb}} - \e^{-\sigma tp^{(k+1)b}}\right) \mathds 1_{\BB_{{-k}}}(x).\end{equation} The set $\{\rho(t,\cdot)\colon t\in (0, \infty)\}$ is a convolution semigroup of probability density functions that determines the probabilities of simple cylinder sets  in the same way as in the real case, by \eqref{Framework:EQ:FormulaFDD}.  These probabilities determine a probability measure $\Pb$ on $D([0,\infty)\colon \mathds Q_{p})$ that is concentrated on the paths that are initially at 0. The triple $(D([0,\infty)\colon \mathds Q_{p}), \Pb, Y)$ is a \emph{$p$-adic Brownian motion}.


\subsection{Construction of approximants}

Henceforth, take $m$ to be an index that varies in $\mathds N_0$ and $\mathds F$ to be either $\mathds R$ or $\mathds Q_p$.  For any discrete, infinite, additive abelian group $G$, take $\tilde{X}$ to be an abstract $G$-valued random variable and $(\tilde{X}_i)$ to be a sequence of independent abstract random variables with the same distribution as $\tilde{X}$.  Denote by $\tilde{S}_n$ the abstract random variable \begin{equation}\label{PrimProcAbs}\tilde{S}_n = \tilde{X}_0  +\tilde{X}_1 +\dots +\tilde{X}_n,\end{equation} where $\tilde{X}_0$ almost surely takes on the value $0$, the identity element of the group.  The abstract stochastic process $\s$ is a \emph{primitive process} with \emph{generator} $\tilde{X}$.  For any history $((0, U_0), (n_1, U_1), \dots, (n_N, U_N))$ of length $N$ with epoch in $\mathds N_0$ and route in $G$, define \begin{align}\label{PremeasureFromAbstract}&{\rm Prob}\!\(\big(\s_{0}\in U_0\big)\cap\cdots\cap\big(\s_{n_N}\in U_N\big)\)\notag\\& \qquad\qquad = \sum_{x_0\in U_0}\cdots\sum_{x_N\in U_N} {\rm Prob}\(\s_{0} = x_0\)\prod_{1\le i\le N}{\rm Prob}\(\s_{n_i-n_{i-1}} = x_{i}-x_{i-1}\).\end{align}  For each $n$, take $S_n$ to be the function \[S_n\colon F(\mathds N_0\colon G) \to G \quad \text{by}\quad S_n(\omega) = \omega(n).\]  The Kolmogorov Extension theorem guarantees the existence of a measure $\Pb^\ast$ on the cylinder sets of $F(\mathds N_0\colon G)$ so that the stochastic process $(F(\mathds N_0\colon G), \Pb^\ast, S)$, the \emph{primitive process}, is a model for $\s$.

\begin{definition}
For any positive null sequence $(\delta_m)$, a sequence $(\Gamma_m)$ of \emph{spatial embeddings} of $G$ into $\mathds F$ with \emph{spatial scale} $\delta$ is a sequence of injective functions from $G$ to $\mathds F$ so that for any $f$ in $\mathds F$, there is a $g$ in $G$ so that \[0<|\Gamma_m(g) - f|\leq\delta_m,\] and for any $g_1$ and $g_2$ in $G$, \[|\Gamma_m(g_1) - \Gamma_m(g_2)| < \delta_m \quad \text{implies that}\quad g_1 = g_2.\]
\end{definition}

\begin{definition}
For any positive null sequence $(\tau_m)$, a \emph{sequence of spatiotemporal embeddings} of $\mathds N_0\times G$ into $[0, \infty)\times\mathds F$ with time scale $\tau$ and sequence of spatial embeddings $\Gamma$ is a sequence $\iota$ with \[\iota_m\colon \mathds N_0\times G \to[0,\infty)\times \mathds F\quad {\rm by}\quad \iota_m(n,g) = \left(n\tau_m, \Gamma_m(g)\right).\]
\end{definition}

For any sequence $\iota$ of spatiotemporal embeddings of $\mathds N_0\times G$ into $[0,\infty)\times \mathds F$ and for any $t$ in $[0,\infty)$, take $\tilde{Y}^m$ to be the abstract stochastic process that is given by \[\tilde{Y}^m_t = \Gamma_m\big(\s_{\floor{\frac{t}{\tau_m}}}\big).\] In the cases to be investigated, bounds on moments guarantee that $\tilde{Y}^m$ has a model with sample paths in $D([0,\infty)\colon \mathds F)$ that almost surely take values in $\Gamma_m(G)$.  Denote by $\Pbm$ the measure on the cylinder sets of $D([0,\infty)\colon \mathds F)$ that gives rise to the finite dimensional distributions of $\Y^m$.  Define $\iota_mS$ to be the process $(D([0,\infty)\colon \mathds F), \Pbm, Y)$.  For appropriate choices of the primitive process and the sequence of spatiotemporal embeddings, the sequence of measures $(\Pbm)$ converges weakly to the probability measure $\Pb$ for a Brownian motion in $\mathds F$.  Such a sequence of approximating measures exists for any Brownian motion in $\mathds F$.  In this precise sense, Brownian motion in $\mathds F$ is a scaling limit of a primitive discrete time random walk on $G$.

\subsection{Classical example: Real Brownian motion}

The abstract random variable $\X$ that is given by the law \[\begin{cases}{\rm Prob}\big(\X = -1\big) = \frac{1}{2}&\mbox{}\\[.2em]{\rm Prob}\big(\X = 1\big) = \frac{1}{2}&\mbox{}\end{cases}\] generates the abstract $\mathds Z$-valued stochastic process $\s$, by \eqref{PrimProcAbs}.  Calculate the mean of $\s_n$, ${\mathds E}[\s_n]$, and the variance of $\s_n$, ${\rm Var}[\s_n]$, to verify that \[{\mathds E}[\s_n] = 0\quad {\rm and}\quad {\rm Var}[\s_n] = n.\]  

For any space scale $(\delta_m)$ and time scale $(\tau_m)$, define $\Gamma$ and $\iota$ for any $(n, z)$ in $N_0\times \mathds Z$ by \[\Gamma_m(z) = \delta_mz \quad{\rm and} \quad \iota_m(n,z) = (n\tau_m, \Gamma_m(z)).\]  The stochastic process $\(F([0,\infty)\colon \mathds R), \Pbm, Y\)$ is a model for $\tilde{Y}^m$.  Denote by ${\mathds E}^m$ the expected value with respect to $\Pbm$. Independence of the $\X_i$ imply that for any positive $t$, \[{\mathds E}^m\Big[\left|Y_{t_2}- Y_{t_1}\right|^2\Big] = {\mathds E}\bigg[\Big|\delta_m\tilde{S}_{\floor{\tfrac{t_2}{\tau_m}}} - \delta_m\tilde{S}_{\floor{\tfrac{t_1}{\tau_m}}}\Big|^2\bigg] = \delta^2_m\left(\floor{\tfrac{t_2}{\tau_m}} - \floor{\tfrac{t_1}{\tau_m}}\right).\] Independence of the increments of $Y$ imply Proposition~\ref{prop:real:centesta}.

\begin{proposition}\label{prop:real:centesta}
For any strictly increasing sequence $(t_1, t_2, t_3)$ in $[0,\infty)$, \[{\mathds E}^m\big[\left|Y_{t_2}- Y_{t_1}\right|^2\left|Y_{t_3} - Y_{t_2}\right|^2\big] \leq \frac{\delta_m^4}{\tau_m^2}\(t_3 - t_1\)^2.\]  
\end{proposition}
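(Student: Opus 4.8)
The plan is to reduce the estimate to a statement about the discrete primitive walk $\s$ and then to an elementary inequality about floors. First I would set $n_i = \floor{t_i/\tau_m}$ for $i$ in $\{1,2,3\}$, so that $n_1 \le n_2 \le n_3$. Under $P^m$ the process $Y$ is a model for $\Y^m$, so each $Y_t$ carries the law of $\Gamma_m\big(\s_{\floor{t/\tau_m}}\big) = \delta_m \s_{\floor{t/\tau_m}}$, and the joint law of the increments is the one encoded by the product structure of the premeasure \eqref{PremeasureFromAbstract}. In particular $Y_{t_2} - Y_{t_1}$ and $Y_{t_3} - Y_{t_2}$ have the laws of $\delta_m(\s_{n_2} - \s_{n_1})$ and $\delta_m(\s_{n_3} - \s_{n_2})$, respectively.

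The key structural point is independence. Since $\s_{n_2} - \s_{n_1} = \X_{n_1+1} + \cdots + \X_{n_2}$ and $\s_{n_3} - \s_{n_2} = \X_{n_2+1} + \cdots + \X_{n_3}$ are sums over disjoint blocks of the independent generators $(\X_i)$, the two increments are independent; this independence is exactly what the factored form of \eqref{PremeasureFromAbstract} expresses under $P^m$. I would therefore write
\[{\rm E}^m\!\left[|Y_{t_2} - Y_{t_1}|^2 |Y_{t_3} - Y_{t_2}|^2\right] = {\rm E}^m\!\left[|Y_{t_2} - Y_{t_1}|^2\right]{\rm E}^m\!\left[|Y_{t_3} - Y_{t_2}|^2\right].\]
Because $\s$ has mean zero and ${\rm Var}[\s_n] = n$, each increment is a sum of mean-zero, variance-one terms, so each factor equals $\delta_m^2$ times its number of steps; that is, ${\rm E}^m[|Y_{t_2} - Y_{t_1}|^2] = \delta_m^2(n_2 - n_1)$ and ${\rm E}^m[|Y_{t_3} - Y_{t_2}|^2] = \delta_m^2(n_3 - n_2)$. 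Multiplying, the left-hand side equals $\delta_m^4(n_2 - n_1)(n_3 - n_2)$.

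It remains to show $(n_2 - n_1)(n_3 - n_2) \le (t_3 - t_1)^2/\tau_m^2$, which is the only nontrivial step. Writing $d_1 = n_2 - n_1$ and $d_2 = n_3 - n_2$, the bound is immediate when $d_1 d_2 = 0$, so assume $d_1, d_2 \ge 1$. From $t_3 \ge \tau_m n_3$ and $t_1 < \tau_m(n_1 + 1)$ I get
\[\frac{t_3 - t_1}{\tau_m} > n_3 - n_1 - 1 = d_1 + d_2 - 1 \ge 2\sqrt{d_1 d_2} - 1 \ge \sqrt{d_1 d_2},\]
using the arithmetic-geometric mean inequality and $d_1 d_2 \ge 1$. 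Squaring yields $(t_3 - t_1)^2/\tau_m^2 > d_1 d_2$, completing the estimate.

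The anticipated obstacle is precisely this last inequality. The tempting per-increment bound $n_{i+1} - n_i \le (t_{i+1} - t_i)/\tau_m$ is false, since a short time interval straddling a grid point still contributes a full step of the walk, and so no factorwise argument works. The saving must instead come from the global observation that producing two genuinely nonzero increments forces $t_3 - t_1$ to span at least one full grid spacing, quantified by $t_3 - t_1 > \tau_m(d_1 + d_2 - 1)$, after which the arithmetic-geometric mean inequality converts the sum bound into the desired product bound.
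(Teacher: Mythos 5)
Your proof is correct, and it takes precisely the route the paper intends: the paper gives no explicit proof of this proposition, saying only that it follows from variance estimates for the primitive process, which is exactly the independence-plus-variance factorization you carry out (yielding $\delta_m^4(n_2-n_1)(n_3-n_2)$ for the left-hand side). The one genuinely delicate step that the paper's one-line justification glosses over --- passing from $(n_2-n_1)(n_3-n_2)$ to $(t_3-t_1)^2/\tau_m^2$, where, as you note, the naive per-increment bound $n_{i+1}-n_i \le (t_{i+1}-t_i)/\tau_m$ fails --- is handled correctly by your floor estimate $t_3 - t_1 > \tau_m(d_1+d_2-1)$ combined with the arithmetic-geometric mean inequality.
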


Proposition~\ref{prop:real:centesta} implies that the process has a version with sample paths in $D([0,\infty)\colon \mathds R)$ \cite{bil1, cent}. Denote once again by $\Pbm$ the probability measure for this version and take $\iota_mS$ to be the process $(D([0,\infty)\colon \mathds R), \Pbm, Y)$.  If there is a positive constant $D$ with \begin{equation}\label{eqn:dtreal}\frac{\delta_m^2}{\tau_m}\to D,\end{equation} then the estimate given by Proposition~\ref{prop:real:centesta} is uniform in $m$, which implies the uniform tightness of $(\Pbm)$. Uniform tightness and the convergence on the simple cylinder sets of $(\Pbm)$ to the Wiener measure $\WM$ with diffusion constant $D$ together imply the weak convergence of $(\Pbm)$ to $\WM$ \cite{bil1,cent}.  Although each $\Pbm$ gives full measure to the $\delta_m\mathds Z$-valued step functions whose jumps occur almost surely in $\tau_m\mathds N$, the measure $\WM$ is concentrated on the continuous functions.

\section{The Primitive Process and its Generator}\label{Sec-Primitive}

\subsection{The primitive $p$-adic state space and its refinements}

Take $G_m$ to be the quotient group $\mathds Q_p\slash p^m\mathds Z_p$ and $[\cdot]_m$ to be the quotient map from $\mathds Q_p$ onto $G_m$ that is given for each $x$ in $\mathds Q_p$ by \[[x]_m = x + p^m\mathds Z_p.\]  Define the absolute value $|\cdot|_m$ on $G_m$ by \[|[x]_m|_m = \begin{cases}|x| &\mbox{if }[x]_m\ne [0]_m\\0 &\mbox{if }[x]_m= [0]_m.\end{cases}\] For any $g$ in $G_m$, denote by $\BB^m_k(g)$ and $\SS^m_k(g)$ the sets \[\BB^m_k(g) = \{h\in G_m\colon |h - g|_m \leq p^k\}\quad {\rm and} \quad\SS^m_k(g) = \{h\in G_m\colon |h - g|_m = p^k\}.\] Suppress $g$ in the notation when $g$ is equal to $[0]$.

Denote by $G$ the group $G_0$ and by $[\cdot]$ the quotient map $[\cdot]_0$.  Take $\mu_m$ to be the counting measure on $G_m$ scaled by a factor of $p^{-m}$, so that for any $g$ the volume of $\BB^m_k(g)$ and $\SS^m_k(g)$ are respectively given by \[\vol(\BB^m_k(g)) = p^{-m}p^k\quad{\rm and} \quad \vol(\SS^m_k(g)) = p^{-m}p^k\big(1 - \tfrac{1}{p}\big).\] For any integrable function $f$ on $G_m$, \begin{equation}\label{Eq:PrimRef:IntandSumatm}\int_{G_m} f([x]_m) \;{\rm d}\mu_m([x]_m) = \sum_{[x]_m\in G_m} f([x]_m)p^{-m}.\end{equation}  Proposition~\ref{StateSpaces:Intmoverballs} follows from the observation that any function that is defined on $\mathds Q_p$ by \[x \mapsto f([x]_m)\] is locally constant with radius of local constancy equal to $p^{-m}$, and the fact that any ball in $\mathds Q_p$ with radius at least $p^{-m}$ is a disjoint union of balls of radius $p^{-m}$.

\begin{proposition}\label{StateSpaces:Intmoverballs}
For any $\mathds C$-valued integrable function $f$ on $G_m$ and any ball $B$ of radius at least $p^{-m}$ in $\mathds Q_p$,  \[\int_{[B]_m} f([x]_m) \,{\rm d}\mu_m([x]_m) = \int_B f([x]_m) \,{\rm d}\mu(x).\]
\end{proposition}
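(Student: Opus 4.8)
The plan is to reduce both sides of the identity to the same finite sum by slicing $B$ into the balls of radius $p^{-m}$ that it contains. Writing the radius of $B$ as $p^k$, the hypothesis that this radius is at least $p^{-m}$ forces $k\geq -m$. The starting observation is that the kernel of the quotient map $[\cdot]_m$ is $p^m\mathds Z_p = B_{-m}(0)$, so two points of $\mathds Q_p$ have the same image in $G_m$ exactly when they lie in a common ball of radius $p^{-m}$. Consequently the fibers of $[\cdot]_m$ are precisely the balls of radius $p^{-m}$, and $[\cdot]_m$ restricts to a bijection between the set of radius-$p^{-m}$ balls contained in $B$ and the set $[B]_m$. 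I would record this bijection first, since it is what links the counting sum on $G_m$ to the decomposition of $B$.

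Next I would invoke the decomposition recalled just before the proposition: $B$ is a disjoint union of the balls of radius $p^{-m}$ it contains, the union being a single ball in the degenerate case $k=-m$. Since $\mu(B)=p^k$ and each radius-$p^{-m}$ ball has measure $p^{-m}$, there are exactly $p^{k+m}$ such balls; in particular $[B]_m$ is finite, so every sum that appears is a finite sum and no convergence issue arises. The integrability hypothesis on $f$ is needed only to give the left-hand integral meaning, not for the finite computation on $[B]_m$.

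Finally I would evaluate the right-hand integral ball by ball. On each radius-$p^{-m}$ ball $b\subseteq B$ the function $x\mapsto f([x]_m)$ is constant, its radius of local constancy being exactly $p^{-m}$; writing its value as $f([x_b]_m)$ for an arbitrary $x_b\in b$ and using $\mu(b)=p^{-m}$ gives $\int_b f([x]_m)\,{\rm d}x = f([x_b]_m)\,p^{-m}$. Summing over the finitely many subballs and passing through the bijection of the first step yields
\[\int_B f([x]_m)\,{\rm d}x \;=\; \sum_{b\subseteq B} f([x_b]_m)\,p^{-m} \;=\; \sum_{[x]_m\in [B]_m} f([x]_m)\,p^{-m},\]
and by the defining relation \eqref{Eq:PrimRef:IntandSumatm} for $\mu_m$, restricted to the finite set $[B]_m$, the last sum is exactly $\int_{[B]_m} f([x]_m)\,{\rm d}\mu_m([x]_m)$. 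I do not expect a genuine obstacle here; the only points demanding care are the identification of the fibers of $[\cdot]_m$ with the radius-$p^{-m}$ balls, which underwrites both the bijection and the finiteness of $[B]_m$, and the degenerate case $k=-m$, where the decomposition of $B$ is trivial.
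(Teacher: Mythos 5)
Your proof is correct and follows essentially the same route as the paper, which justifies the proposition in a single sentence preceding it: the function $x\mapsto f([x]_m)$ is locally constant with radius of local constancy $p^{-m}$, and any ball of radius at least $p^{-m}$ decomposes as a disjoint union of balls of radius $p^{-m}$. Your write-up simply makes explicit the supporting details the paper leaves implicit (fibers of $[\cdot]_m$ as cosets of $p^m\mathds Z_p$, the induced bijection with $[B]_m$, and the match with the scaled counting measure in \eqref{Eq:PrimRef:IntandSumatm}).
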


Take $\alpha_m$ to be the isomorphism from $G$ to $G_m$ that is defined for any $[x]$ in $G$ by \[\alpha_m(x + \mathds Z_p) = p^mx + p^m\mathds Z_p.\]  The group $p^{-m}\mathds Z_p$ is the Pontryagin dual of the group $G_m$ and any element of $G_m$ is $[x]_m$ for some $x$ in $\mathds Q_p$.  Use the canonical inclusion map that takes $p^{-m}\mathds Z_p$ into $\mathds Q_p$ to define the dual pairing $\langle\cdot, \cdot\rangle_m$, the function \[\langle\cdot, \cdot\rangle_m \colon p^{-m}\mathds Z_p \times G_m \to \mathds S^1\quad {\rm by}\quad \langle [x]_m, y\rangle_m = \chi(xy).\]  For any $([x]_m, w, y)$ in $G_m\times p^{m}\mathds Z_p \times p^{-m}\mathds Z_p$, the product $wy$ is in $\mathds Z_p$. The equality \begin{align*}\chi((x+w)y)= \chi(xy)\chi(w y) = \chi(xy)\end{align*} follows from the fact that $\chi$ is an additive rank $0$ character on $\mathds Q_p$, and it implies that the definition of the dual pairing is independent of the choice of the representative. The equalities %
\begin{align*}
\langle \alpha_m([x]), y\rangle_m = \langle [p^mx]_m, y\rangle_m = \chi(p^mxy) = \langle [x], p^my\rangle
\end{align*}
imply Proposition~\ref{charactonGmrelatestoG}.

\begin{proposition}\label{charactonGmrelatestoG}
For any $(x, y)$ in $\mathds Q_p\times p^{-m}\mathds Z_p$, \[\langle [x]_m, y\rangle_m = \langle [x], p^my\rangle.\]
\end{proposition}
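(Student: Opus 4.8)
The plan is to derive the identity directly from the definitions of the two dual pairings and the definition of $\alpha_m$, reducing everything to the commutativity and associativity of multiplication in $\mathds Q_p$; this is exactly the content of the displayed computation that precedes the statement, so the proof consists of recording and justifying that computation.

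First I would check that the right-hand side is well-posed. Because $y$ lies in $p^{-m}\mathds Z_p$, the product $p^m y$ lies in $\mathds Z_p$, which is the Pontryagin dual of $G = G_0$; hence $\langle [x], p^m y\rangle = \langle [x]_0, p^m y\rangle_0$ is a genuine evaluation of the level-zero pairing. I would then expand both sides with the defining rule $\langle [z]_m, w\rangle_m = \chi(zw)$. Using $\alpha_m([x]) = [p^m x]_m$ from the definition of $\alpha_m$, the level-$m$ pairing gives $\chi(p^m x \cdot y)$, while the level-zero pairing gives $\chi(x \cdot p^m y)$. Since $\chi$ is a fixed character and $p^m x \cdot y = x \cdot p^m y = p^m x y$ in $\mathds Q_p$, the two characters coincide and the asserted equality follows.

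The single point needing care, already handled in the paragraph before the statement, is that each pairing must be independent of the coset representative chosen. I would invoke that $\chi$ is a rank-zero character: altering a representative of $[x]_m$ by an element of $p^m\mathds Z_p$, or a representative of $[x]_0$ by an element of $\mathds Z_p$, changes the argument of $\chi$ by a product that lands in $\mathds Z_p$, where $\chi$ is trivial. I expect no real obstacle here; the entire difficulty is bookkeeping, namely keeping track of which factor of $p^m$ is absorbed into the group element and which into the dual variable so that both arguments of $\chi$ reduce to $p^m x y$.
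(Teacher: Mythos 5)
Your proof is correct and is essentially the paper's own argument: both reduce the two sides to the single character value $\chi(p^m x y)$, using $\alpha_m([x]) = [p^m x]_m$, the defining rule $\langle [z]_m, w\rangle_m = \chi(zw)$, and the representative-independence of the pairings coming from the rank-zero property of $\chi$. One point worth making explicit: like the paper's displayed computation, you read the left-hand side as $\langle \alpha_m([x]), y\rangle_m$ rather than as the pairing of the literal quotient image $[x]_m$ of the same representative $x$ (which would give $\chi(xy)$, not $\chi(p^m xy)$); this is the intended reading, and your bookkeeping of where the factor $p^m$ is absorbed matches the paper's.
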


Define the Fourier transform $\mathcal F_m$ that takes $L^2(G_m)$ to $L^2(p^{-m}\mathds Z_p)$ and the inverse Fourier transform $\mathcal F^{-1}_m$ that takes $L^2(p^{-m}\mathds Z_p)$ to $L^2(G_m)$ to be given for any $f$ in $L^2(G_m)$ and any $\tilde{f}$ in $L^2(p^{-m}\mathds Z_p)$ by \begin{align*}(\mathcal F_mf)(y) &= \int_{G_m}\langle -[x]_m,y\rangle_m f([x]_m)\,{\rm d}\mu_m([x]_m)\\\text{and} \quad  (\mathcal F_m^{-1}\tilde{f})\left([x]_m\right) &= \int_{p^{-m}\mathds Z_p}\langle [x]_m,y\rangle_m \tilde{f}(y)\,{\rm d}\mu(y).\end{align*}


\subsection{The generator of the primitive process}

Take $\X$ to be the $G_0$-valued abstract random variable with probability mass function $\rho_{\X}$ given by \begin{equation}\label{equation:PrimitiveRVLaw}\begin{cases} \rho_{\X}(\x) = \frac{p^b-1}{p^{ib}}\frac{1}{p^i-p^{i-1}} &\mbox{if } |\x| = p^i\\\rho_{\X}([0]) = 0,&\mbox{ }\end{cases} \quad\text{so that}\quad \begin{cases} \Prob(\X\in \SS^0_i) = \frac{p^b-1}{p^{ib}} &\mbox{}\\\Prob(\X=[0]) = 0.&\mbox{}\end{cases}\end{equation}  The law for $\X$ is different from the one previously introduced \cite{BW}.  Denote by $\phi_{\X}$ the Fourier transform of $\rho_{\X}$, the characteristic function of $\X$ that has domain equal to $\mathds Z_p$.  To compress notation, write \[\rho(i) = \frac{p^b-1}{p^{ib}}\cdot\frac{1}{p^i-p^{i-1}}.\] %
For any $y$ in $\mathds Z_p$, there is a $k$ in $\mathds N_0$ so that \begin{equation}\label{Secfour:Equation:PrimYk-sum-y}|y| = p^{-k},\end{equation} and so
\begin{align}\label{Secfour:Equation:PrimYk-sum}
\phi_{\X}(y) & = \int_G\langle -\x,y\rangle\rho_X(\x)\,{\rm d}\mu_0(\x)\notag\\
& = \sum_{i\in \mathds N} \rho(i) \int_{\SS^0_i} \langle -\x,y\rangle\,{\rm d}\mu_0(\x)\notag\\
& = \sum_{i\in \mathds N} \rho(i)\left\{\int_{\BB^0_i} \langle -\x, y\rangle\,{\rm d}\mu_0(\x) - \int_{\BB^0_{i-1}} \langle -\x,y\rangle{\rm d}\mu_0(\x)\right\}\notag\\
& = -\rho(1)\int_{\BB^0_0}\langle -\x,y\rangle\,{\rm d}\mu_0(\x) + \sum_{i\in \mathds N} (\rho(i) - \rho(i+1))\int_{\BB^0_i} \langle -\x,y\rangle\,{\rm d}\mu_0(\x)\notag\\
& = -\frac{p^b-1}{p^b}\cdot\frac{1}{p-1}{\mathds 1}_{\mathds Z_p}(y) + (p^b-1)\Big(\frac{p}{p-1}\Big)\Big(1-\frac{1}{p^{b+1}}\Big)\sum_{i\in \mathds N} \frac{1}{p^{ib}}{\mathds 1}_{p^{-i}}(y)\notag\\
& = -\frac{p^b-1}{p^b}\cdot\frac{1}{p-1} + (p^b-1)\Big(\frac{p}{p-1}\Big)\Big(1-\frac{1}{p^{b+1}}\Big)\sum_{i=1}^k \frac{1}{p^{ib}}.
%
%
%
\end{align}
Use \eqref{Secfour:Equation:PrimYk-sum-y} to simplify \eqref{Secfour:Equation:PrimYk-sum} and obtain Proposition~\ref{Prop:CharPrim}. 

\begin{proposition}\label{Prop:CharPrim}
For any $y$ in $\mathds Z_p$, \[\phi_{\X}(y) = 1 - \alpha |y|^b \quad {\rm where}\quad \alpha = \frac{p^{b+1}-1}{p^{b+1}-p^b}.\] 
\end{proposition}

 \begin{proposition}\label{Prim:Prop:Boundonalphaoverpb}
 The values $\frac{\alpha}{p^b}$ and $\alpha -1$ are both in $(0, 1)$.
 \end{proposition}
 
\begin{proof}
The Vladimirov exponent $b$ is positive and $p$ is at least 2, so \[\alpha -1 = \frac{1}{p-1}\cdot\frac{p^b-1}{p^b} < 1.\]

Multiply $2p^{2b+1}\ln(p)$ by both sides of the inequality \[\frac{p-1}{p} > \frac{1}{2p^b}\] and write both sides of the resulting inequality as derivatives to obtain the inequality \begin{align*}\frac{{\rm d}}{{\rm d}b}(p^{2b+1} - p^{2b}) > \frac{{\rm d}}{{\rm d}b}(p^{b+1} - 1).\end{align*} The equality of $p^{b+1}-1$ and $p^{2b+1}-p^{2b}$ when $b$ is equal to $0$ implies that for any positive $b$, \[\frac{\alpha}{p^b} = \frac{p^{b+1}-1}{p^{2b+1}-p^{2b}}<1.\]
\end{proof}


\subsection{The primitive process}
 
Take $\s$ to be the abstract stochastic process defined by \eqref{PrimProcAbs} and, for any $n$ in $\mathds N$, take $\rho^\ast(n, \cdot)$ to be the probability mass function for $\s_n$.

\begin{proposition}
For any natural number $n$ and $[x]$ in $G$, \begin{equation}\label{eq:pmfforprimitiveprocess}\rho^\ast(n, \x) = (1-\alpha)^n\mathds 1_{\BB^0_0}(\x) + \sum_{i\in \mathds N}\Big(\Big(1-\frac{\alpha}{p^{ib}}\Big)^n - \Big(1-\frac{\alpha}{p^{(i-1)b}}\Big)^n\Big)p^{-i}\mathds 1_{\BB^0_i}(\x).\end{equation}
\end{proposition}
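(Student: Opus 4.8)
The plan is to move to the Fourier side, where independence of the $\X_i$ turns the mass function of the sum into a product, and then to invert. Denote by $\phi_{\s_n}$ the Fourier transform of $\rho^\ast(n,\cdot)$. Since $\s_n$ is a sum of $n$ independent copies of $\X$ together with the summand $\X_0$, which is almost surely $[0]$ and hence contributes only a factor of $1$, independence gives $\phi_{\s_n}=(\phi_\X)^n$, so the preceding proposition yields, for $y$ in $\mathds Z_p$,
\[\phi_{\s_n}(y) = \big(1 - \alpha|y|^b\big)^n.\]
Recovering $\rho^\ast(n,\cdot)$ by the inverse Fourier transform $\mathcal F_0^{-1}$, which integrates against $\langle \x, y\rangle = \chi(xy)$ over the dual group $\mathds Z_p$, gives
\[\rho^\ast(n,\x) = \int_{\mathds Z_p}\chi(xy)\big(1 - \alpha|y|^b\big)^n\,{\rm d}y.\]
This integral is well defined, the integrand being bounded on the finite-measure space $\mathds Z_p$, and it is independent of the representative $x$ of $\x$ exactly as for the dual pairing established above.

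Next I would partition $\mathds Z_p$ into the circles $\{|y| = p^{-k}\}$, on each of which the integrand is the constant $\big(1 - \alpha/p^{kb}\big)^n$, and write each circle integral as the difference of the two ball integrals over $B_{-k}$ and $B_{-k-1}$. Orthogonality of characters evaluates these through $\int_{B_{-k}}\chi(xy)\,{\rm d}y = p^{-k}\mathds 1_{B^0_k}(\x)$, so that the resulting series for $\rho^\ast(n,\x)$ is a telescoping combination of the nested indicators $\mathds 1_{B^0_k}(\x)$; this is the same circle-versus-ball manipulation already used to compute $\phi_\X$. Collecting terms by the indicator $\mathds 1_{B^0_k}(\x)$—equivalently, performing a summation by parts—leaves the coefficient $(1-\alpha)^n$ on $\mathds 1_{B^0_0}$ and the coefficient $\big(\big(1 - \alpha/p^{kb}\big)^n - \big(1 - \alpha/p^{(k-1)b}\big)^n\big)p^{-k}$ on $\mathds 1_{B^0_k}$ for $k\ge 1$, which is precisely \eqref{eq:pmfforprimitiveprocess} after relabeling $k$ as $i$.

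The one point demanding care is this rearrangement. Interchanging the sum over circles with the integral is legitimate by dominated convergence, since $\big|1 - \alpha|y|^b\big|^n$ is bounded by $\max\{1,|1-\alpha|\}^n$ while the circle measures $p^{-k}(1 - 1/p)$ are summable. The regrouping is a summation by parts whose boundary term $\big(1-\alpha/p^{Nb}\big)^n\,p^{-(N+1)}\mathds 1_{B^0_{N+1}}(\x)$ tends to $0$ as $N\to\infty$, so the rearranged series indeed converges to $\rho^\ast(n,\x)$. I expect this bookkeeping, rather than any single estimate, to be the main obstacle; the $p$-adic ball integrals themselves are routine applications of orthogonality.
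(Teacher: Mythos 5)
Your proof is correct and takes essentially the same route as the paper's: both pass to the Fourier side via $\phi_{\s_n} = (\phi_{\X})^n = \big(1-\alpha|\cdot|^b\big)^n$, invert over the dual group $\mathds Z_p$, decompose that integral into circles on which the integrand is constant, evaluate the resulting ball integrals by character orthogonality as $p^{-k}\mathds 1_{B^0_k}(\x)$, and regroup telescopically to obtain the stated indicator expansion. The only difference is that you spell out the dominated-convergence and vanishing-boundary-term justifications for the rearrangement, which the paper's computation leaves implicit.
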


\begin{proof}
The Fourier transform takes the convolution of functions to their product, so compute the inverse Fourier transform of the $n^{\rm th}$ power of $\phi_{\X}$ to obtain the equalities
\begin{align*}
\rho^\ast(n, \x) 
&= \sum_{i\in \mathds N_0}\int_{|y| = p^{-i}}\langle \x,y\rangle\big(1-\alpha|y|^b\big)^n\,{\rm d}y\\
&= \sum_{i\in \mathds N_0}\Big(1-\frac{\alpha}{p^{ib}}\Big)^n\int_{|y| = p^{-i}}\langle \x,y\rangle\,{\rm d}y\\
&= (1-\alpha)^n\bigg(\int_{|y|\leq 1}\langle \x,y\rangle\,{\rm d}y - \int_{|y|\leq p^{-1}}\langle \x,y\rangle\,{\rm d}y\bigg)\\&\hspace{1in} + \sum_{i\in \mathds N}\Big(1-\frac{\alpha}{p^{ib}}\Big)^n\bigg(\int_{|y|\leq p^{-i}}\langle \x,y\rangle\,{\rm d}y - \int_{|y|\leq p^{-(i+1)}}\langle \x,y\rangle\,{\rm d}y\bigg)\\
&= (1-\alpha)^n\int_{|y|\leq 1}\langle \x,y\rangle\,{\rm d}y\\&\hspace{1in} + \sum_{i\in \mathds N}\Big(\Big(1-\frac{\alpha}{p^{ib}}\Big)^n - \Big(1-\frac{\alpha}{p^{(i-1)b}}\Big)^n\Big)\int_{|y|\leq p^{-i}}\langle \x,y\rangle\,{\rm d}y\\
&= (1-\alpha)^n\mathds 1_{\BB^0_0}(\x) + \sum_{i\in \mathds N}\Big(\Big(1-\frac{\alpha}{p^{ib}}\Big)^n - \Big(1-\frac{\alpha}{p^{(i-1)b}}\Big)^n\Big)p^{-i}\mathds 1_{\BB^0_i}(\x).
\end{align*}
\end{proof}

The Kolmogorov extension theorem guarantees that $\s$ has a model with sample paths in $F(\mathds N_0\colon G)$, a stochastic process $(F(\mathds N_0\colon G), \Pb^\ast, S)$ with finite dimensional distributions given by \eqref{PremeasureFromAbstract}.  Take $\mathds E_\ast[\cdot]$ to be the expected value with respect to $\Pb^\ast$.


\subsection{Bounds for low order moments of the primitive process}\label{sec:PrimitiveOrderEstimates}

In the real setting, moment estimates come from a straightforward calculation of variance.  Determining sufficient estimates in the $\mathds Q_p$ setting, irrespective of the value of $b$, is more involved.

\begin{theorem}\label{LOM:Theorem:PrimitiveMoments}
There is a $K$ in $\mathds R$ so that for any $r$ in $(0, b)$ and any $n$ in $\mathds N$, 
\begin{equation}\label{eq:KeyMomentEstimatePrimitive}
{\mathds E}_\ast\big[|S_n|^r\big] <Kn^{\frac{r}{b}}.\end{equation} 
\end{theorem}

\begin{proof}
Use the explicit expression for $\rho^\ast(n,x)$ to obtain the equalities
\begin{align}\label{sec:PrimitiveOrderEstimates}
{\mathds E}_\ast\big[|S_n|^r\big] &= \int_{G}|[x]|^r\rho^\ast(n,[x])\,{\rm d}\mu_0([x])\notag\\
&= \sum_{i\in \mathds N}\Big(\Big(1-\frac{\alpha}{p^{ib}}\Big)^n - \Big(1-\frac{\alpha}{p^{(i-1)b}}\Big)^n\Big)\int_{G}|[x]|^rp^{-i}\mathds 1_{B^0_i}([x])\,{\rm d}\mu_0([x]).
\end{align}
An integral over $G$ is a countable sum of integrals over the $\SS^0_i$, namely%
\begin{align}\label{sec:PrimitiveOrderEstimatesIntegral}
\int_{G}|[x]|^rp^{-i}\mathds 1_{\BB^0_i}([x])\,{\rm d}\mu_0([x]) & = \bigg(\int_{\SS^0_i}|[x]|^r\,{\rm d}\mu_0([x]) + \cdots + \int_{\SS^0_1}|[x]|^r\,{\rm d}\mu_0([x]) + 0\bigg)p^{-i}\notag\\
& = \Big(p^r\big(p^1 - 1\big) + \cdots + p^{ir}\big(p^i - p^{i-1}\big)\Big)p^{-i} \notag\\& = \frac{p^{r+1}-p^r}{p^{r+1}-1}\big(p^{ir} - p^{-i}\big) < \big(p^{ir} - p^{-i}\big).
\end{align} %
Equalities \eqref{sec:PrimitiveOrderEstimates} and \eqref{sec:PrimitiveOrderEstimatesIntegral} together imply that 
\begin{align*}
{\mathds E}_\ast\big[\big|S_n\big|^r\big] & < \sum_{i\in\mathds N}\big(p^{ir}-p^{-i}\big)\Big(\Big(1-\frac{\alpha}{p^{ib}}\Big)^n - \Big(1-\frac{\alpha}{p^{(i-1)b}}\Big)^n\Big)\\
&=\sum_{i\in\mathds N}p^{ir}\Big(\Big(1-\frac{\alpha}{p^{ib}}\Big)^n - \Big(1-\frac{\alpha}{p^{(i-1)b}}\Big)^n\Big) - \sum_{i\in\mathds N}p^{-i}\Big(\Big(1-\frac{\alpha}{p^{ib}}\Big)^n - \Big(1-\frac{\alpha}{p^{(i-1)b}}\Big)^n\Big).
\end{align*}

For any natural number $i$ that is greater than 1, \begin{equation}\label{Sec-UBPrim:Eq:DifftonandInt}\Big(1-\frac{\alpha}{p^{ib}}\Big)^n - \Big(1-\frac{\alpha}{p^{(i-1)b}}\Big)^n = n\int_{1-\frac{\alpha}{p^{(i-1)b}}}^{1-\frac{\alpha}{p^{ib}}}s^{n-1}\,{\rm d}s.\end{equation} The integrand in \eqref{Sec-UBPrim:Eq:DifftonandInt} is increasing, and so
\begin{equation}\label{Sec-UBPrim:Eq:DifftonandIntb}
\Big(1-\frac{\alpha}{p^{ib}}\Big)^n - \Big(1-\frac{\alpha}{p^{(i-1)b}}\Big)^n <n\Big(1-\frac{\alpha}{p^{ib}}\Big)^{n-1}\frac{\alpha}{p^{(i-1)b}}\Big(1-\frac{1}{p^b}\Big).
\end{equation}
 Take $I(n)$ to be the quantity 
 \begin{equation*}
 I(n) = \sum_{i>1}p^{ir}\Big(\Big(1-\frac{\alpha}{p^{ib}}\Big)^n - \Big(1-\frac{\alpha}{p^{(i-1)b}}\Big)^n\Big).
 \end{equation*}
For any natural number $i$ that is greater than 1, 
 \begin{equation*}
p^{-i}\Big(\Big(1-\frac{\alpha}{p^{ib}}\Big)^n - \Big(1-\frac{\alpha}{p^{(i-1)b}}\Big)^n\Big)>0,
 \end{equation*}
 and so
\begin{equation}\label{PrimMom:ESNthreetermsb}
{\mathds E}_\ast\big[\big|S_n\big|^r\big] < I(n)  + \big(p^{r}-p^{-1}\big)\Big(\Big(1-\frac{\alpha}{p^{b}}\Big)^n - (1-\alpha)^n\Big).
\end{equation}

The inequality \eqref{Sec-UBPrim:Eq:DifftonandIntb} implies that 
\begin{align}\label{Sec-UBPrim:Eq:toreindex}
I(n) & < \sum_{i>1}p^{ir}n\Big(1-\frac{\alpha}{p^{ib}}\Big)^{n-1}\frac{\alpha}{p^{(i-1)b}}\Big(1-\frac{1}{p^b}\Big)\notag\\
& = n\alpha^{\frac{r}{b}}\sum_{i>1}\Big(\frac{\alpha}{p^{ib}}\Big)^{-\frac{r}{b}}\Big(1-\frac{\alpha}{p^{ib}}\Big)^{n-1}\frac{\alpha}{p^{(i-1)b}}\Big(1-\frac{1}{p^b}\Big).
\end{align}
Write \[x_i = \frac{\alpha}{p^{ib}}, \quad I_i = [x_i, x_{i-1}], \quad {\rm and}\quad \Delta I_i = x_{i-1} - x_{i}\] to obtain the equality \begin{equation}\label{Sec-UBPrim:SimpWithXiIi}n\alpha^{\frac{r}{b}}\sum_{i>1}\Big(\frac{\alpha}{p^{ib}}\Big)^{-\frac{r}{b}}\Big(1-\frac{\alpha}{p^{ib}}\Big)^{n-1}\frac{\alpha}{p^{(i-1)b}}\Big(1-\frac{1}{p^b}\Big) = n\alpha^{\frac{r}{b}}\sum_{i>1}x_i^{-\frac{r}{b}}(1-x_i)^{n-1}\Delta I_i.\end{equation}
Reindex the sum in \eqref{Sec-UBPrim:Eq:toreindex} and use the fact that $\Delta I_i$ is equal to $p^b\Delta I_{i+1}$ together with \eqref{Sec-UBPrim:SimpWithXiIi} to see that 
\begin{align}\label{Sec-UBPrim:InpreBetafirst}
I(n) & < n\alpha^{\frac{r}{b}}p^b\sum_{i>2}x_{i-1}^{-\frac{r}{b}}\Big(1-x_{i-1}\Big)^{n-1}\Delta I_i,
\end{align}
and so
\begin{align}\label{Sec-UBPrim:InpreBeta}
I(n)& <  n\alpha^{\frac{r}{b}}p^b\int_0^{\frac{\alpha}{p^{2b}}}x^{-\frac{r}{b}}(1-x)^{n-1}\Delta I_i
\end{align}
since the sum in \eqref{Sec-UBPrim:InpreBetafirst} is a lower Riemann sum approximation of the integral in \eqref{Sec-UBPrim:InpreBeta}.  The positivity on $(0,1)$ of the integrand in \eqref{Sec-UBPrim:InpreBeta} implies that
\begin{align}\label{Sec-UBPrim:InpreBetab}
I(n) & <  n\alpha^{\frac{r}{b}}p^b\int_0^{1}x^{-\frac{r}{b}}(1-x)^{n-1}\,{\rm d}x\notag\\ 
& = n\alpha^{\frac{r}{b}}p^bB\Big(\frac{b-r}{b}, n\Big) = n\alpha^{\frac{r}{b}}p^b\frac{\Gamma\big(\frac{b-r}{b}\big)\Gamma(n)}{\Gamma\big(\frac{b-r}{b}+n\big)},
\end{align}
where $B$ is the beta function and $\Gamma$ is the gamma function.

Wendel proved \cite{Wend} that for any positive $x$ and $a$ in $[0,1)$, \begin{equation*}
1 \leq \frac{x^a\Gamma(x)}{\Gamma(x+a)} \leq \Big(\frac{x}{x+a}\Big)^{1-a},\end{equation*} and so \begin{equation}\label{Wend:Gamma}\frac{\Gamma(n)}{\Gamma\big(\frac{b-r}{b}+n\big)}\leq n^{-\frac{b-r}{b}}\Big(\frac{n+\frac{b-r}{b}}{n}\Big)^{1-\frac{b-r}{b}} = n^{-\frac{b-r}{b}}\Big(\frac{n+\frac{b-r}{b}}{n}\Big)^{\frac{r}{b}} = e_1(n)n^{-\frac{b-r}{b}}\end{equation}
where for all $n$, \[e_1(n) < 2 \quad {\rm and}\quad \lim_{n\to \infty} e_1(n) =1.\] The inequalities \eqref{Sec-UBPrim:InpreBetab} and \eqref{Wend:Gamma} together imply that
\begin{align}\label{Sec-UBPrim:InBetaalmostthere}
I(n) &< 
2\alpha^{\frac{r}{b}}p^b\Gamma\Big(\frac{b-r}{b}\Big)n^{\frac{r}{b}}.
\end{align}

To bound from above the second summand in  \eqref{PrimMom:ESNthreetermsb}, take \[\delta = \max\Big\{\Big|1-\frac{\alpha}{p^b}\Big|, |1-\alpha|\Big\} < 1\] so that \begin{equation}\label{PrimMom:equation:seconderrorterm} 0 < \bigg|\Big(1-\frac{\alpha}{p^b}\Big)^n - (1-\alpha)^n\bigg| \leq 2\delta^n = e_2(n)n^{\frac{r}{b}} \quad {\rm where}\quad e_2(n) = (2\delta^n n^{-\frac{r}{b}}).\end{equation} Since $e_2(n)n^{\frac{r}{b}}$ tends to zero as $n$ tends to infinity and $e_2(n)$ is bounded above by 2, the inequalities \eqref{PrimMom:ESNthreetermsb}, \eqref{Sec-UBPrim:InBetaalmostthere}, and \eqref{PrimMom:equation:seconderrorterm} together imply that
\begin{align*}
{\mathds E}\big[\big|S_n\big|^r\big] &< 2\alpha^{\frac{r}{b}}p^b\Gamma\Big(\frac{b-r}{b}\Big)n^{\frac{r}{b}} +2p^rn^{\frac{r}{b}} = Kn^{\frac{r}{b}}, \quad {\rm where}\quad K = 2\Big(p^r+\alpha^{\frac{r}{b}}p^b\Gamma\Big(\frac{b-r}{b}\Big)\Big).
\end{align*}
\end{proof}

\section{Convergence of the Discrete Time Processes}\label{Sec-ScalingLimits-Revised}

\subsection{Embeddings into the continuous state space}

Take $q_m$ to be an injection from $G_m$ to $\mathds Q_p$ that has the property that for any $g$ in $G_m$, $[q_m(g)]_m$ is equal to $g$.  Denote by ${\mathds 1}_m$ the indicator function on the set $(-\infty, m]\cap \mathds Z$.  Use $q_m$ and \eqref{Def:ax} to define for any $g$ in $G_m$ the injection $j_m$ from $G_m$ to $\mathds Q$ by \[j_m(g) =  \sum_{k\in \mathds Z} a_{q_m(g)}\!\(k\){\mathds 1}_m(k)p^k,\] which is independent of the choice of injection $q_m$.  The sequence of functions \begin{equation}\label{StateSpaces:Def:Gamma}\Gamma_m \colon G \to \mathds Q_p\quad {\rm by}\quad \Gamma_m(g) = j_m(\alpha_m(g))\end{equation} is a sequence of spatial embeddings of $G$ into $\mathds Q_p$ with spatial scale $(p^{-m})$.  For any $g$ in $G$, \begin{equation}\label{3:eqn:cosmetic}j_m(\alpha_m(g)) = p^mj_0(g).\end{equation} The sequence of discrete sets $(\Gamma_m(G))$ is a sequence of refinements of grids that approximates $\mathds Q_p$ in the sense that, for any $m_1$ and $m_2$ in $\mathds N_0$, if $m_1$ is less than $m_2$, then $\Gamma_{m_1}(G)$ is a subset of $\Gamma_{m_2}(G)$, and the union over all $m$ of the $\Gamma_m(G)$ is dense in $\mathds Q_p$. Equation~\eqref{3:eqn:cosmetic} implies Proposition~\ref{Prop:GammaAbsGm}.

\begin{proposition}\label{Prop:GammaAbsGm}
For any finite sequence $(g_i)_{i\in\{1, \dots, k\}}$ in $G$, \[\Big|\Gamma_m\!\Big(\sum_{i=1}^kg_i\Big)\Big| = p^{-m}\Big|\sum_{i=1}^{k}g_i\Big|.\]
\end{proposition}


\subsection{Spatiotemporal embeddings of the primitive process}

Denote by $\X^m$ the abstract random variable $\alpha_m\circ \X$.  The probability density function for $\X^m$, $\rho_{\X^m}$, is given for any $x$ in $\mathds Q_p$ by \[\rho_{\X^m}([x]_m) = \rho_{\X}([x]).\] The group $p^{-m}\mathds Z_p$ is the Pontryagin dual of $G_m$ and is the domain of $\phi_{\X^m}$, the characteristic function of $\X^m$.

\begin{proposition}\label{ScalLim:Prop:FTmprocess}
For any $y$ in $p^{-m}\mathds Z_p$, \[\phi_{\X^m}(y) = 1 - \alpha \frac{|y|^b}{p^{mb}}.\]
\end{proposition}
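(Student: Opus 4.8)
The plan is to show that the Fourier transform on $G_m$ of $\rho_{\X^m}$, evaluated at $y$, coincides with the Fourier transform on $G$ of $\rho_{\X}$ evaluated at the rescaled dual point $p^m y$, and then to invoke the already-established formula $\phi_{\X}(z) = 1 - \alpha|z|^b$, valid for $z$ in $\mathds Z_p$. The observation that makes this viable is that $y$ in $p^{-m}\mathds Z_p$ forces $p^m y$ into $\mathds Z_p$, so $p^m y$ is a legitimate point of the Pontryagin dual of $G$ and the known formula applies. Granting the identity $\phi_{\X^m}(y) = \phi_{\X}(p^m y)$, the conclusion is immediate: since $|p^m y| = p^{-m}|y|$, one gets $\phi_{\X^m}(y) = 1 - \alpha|p^m y|^b = 1 - \alpha\,p^{-mb}|y|^b$.

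To establish $\phi_{\X^m}(y) = \phi_{\X}(p^m y)$, I would begin from the definition of $\mathcal F_m$, writing
\[\phi_{\X^m}(y) = \int_{G_m}\langle -[x]_m, y\rangle_m\,\rho_{\X^m}([x]_m)\,{\rm d}\mu_m([x]_m).\]
Proposition~\ref{charactonGmrelatestoG} replaces the character $\langle -[x]_m, y\rangle_m$ by $\langle -[x], p^m y\rangle$, and the relation $\rho_{\X^m}([x]_m) = \rho_{\X}([x])$ replaces the density. Because $p^m y$ lies in $\mathds Z_p$, the pairing $\langle -[x], p^m y\rangle = \chi(-x\,p^m y)$ depends on $x$ only through its class $[x]$ in $G = \mathds Q_p/\mathds Z_p$: altering $x$ by an element $v$ of $\mathds Z_p$ multiplies the pairing by $\chi(-v\,p^m y)$, which is $1$ since $v\,p^m y$ lies in $\mathds Z_p$ and $\chi$ is trivial there. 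The density $\rho_{\X}([x])$ manifestly has the same property, so the whole integrand factors through the reduction map $G_m \to G$ induced by the inclusion $p^m\mathds Z_p \subseteq \mathds Z_p$, which is available since $m \geq 0$.

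The one step that requires genuine care, and the main obstacle, is the passage from the integral over $G_m$ to the integral over $G$. Using \eqref{Eq:PrimRef:IntandSumatm} to write the $\mu_m$-integral as $p^{-m}$ times a sum over $G_m$, and noting that the reduction map $G_m \to G$ is exactly $p^m$-to-one (each fiber being a coset of $\mathds Z_p/p^m\mathds Z_p$), the factor $p^m$ from the fibers cancels the normalization $p^{-m}$ of $\mu_m$. This yields
\[\phi_{\X^m}(y) = \int_G \langle -[x], p^m y\rangle\,\rho_{\X}([x])\,{\rm d}\mu_0([x]) = (\mathcal F\rho_{\X})(p^m y) = \phi_{\X}(p^m y).\]
Combining this with the known evaluation of $\phi_{\X}$ and with $|p^m y| = p^{-m}|y|$ completes the argument. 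The only subtlety to verify carefully is the bookkeeping of the two compensating powers of $p$, together with the well-definedness of the reduction map; both rest on $m$ being nonnegative.
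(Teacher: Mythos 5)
Your proof is correct, and its skeleton is the paper's: both arguments establish $\phi_{\X^m}(y)=\phi_{\X}(p^my)$ by combining Proposition~\ref{charactonGmrelatestoG} with the relation $\rho_{\X^m}([x]_m)=\rho_{\X}([x])$, and then finish with $\phi_{\X}(z)=1-\alpha|z|^b$ and $|p^my|=p^{-m}|y|$. The genuine difference is in the step you identified as the main obstacle. The paper never mentions the reduction map $G_m\to G$; instead it applies Proposition~\ref{StateSpaces:Intmoverballs} twice, once to lift the $\mu_m$-integral over $G_m$ to a Haar integral over $\mathds Q_p$, and once more (with $m=0$, after the substitutions) to push that integral back down to the $\mu_0$-integral over $G$. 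You stay on the discrete quotients and trade the $G_m$-integral for the $G$-integral in one stroke, by checking that the substituted integrand is constant on the $p^m$-element fibers of $G_m\to G$ (precisely because $p^my\in\mathds Z_p$ and $\chi$ is trivial there) and cancelling that fiber cardinality against the $p^{-m}$ normalization in \eqref{Eq:PrimRef:IntandSumatm}. Your version is more elementary and self-contained; the paper's has the advantage that the identical lift-to-$\mathds Q_p$ maneuver is needed again in Propositions~\ref{ScalLim:Prop:} and~\ref{5:prop:restrictedhistconv}, where the limiting objects really are Haar integrals over $\mathds Q_p$. One caveat, inherited from the paper rather than introduced by you: the displayed computation preceding Proposition~\ref{charactonGmrelatestoG} actually proves $\langle\alpha_m([x]),y\rangle_m=\langle[x],p^my\rangle$, and the $\mu_m$-density of $\X^m=\alpha_m\circ\X$ carries a factor of $p^m$ relative to $\rho_{\X}$; read literally, $\chi(xy)$ need not equal $\chi(xp^my)$. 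With the isomorphism $\alpha_m$ made explicit, both your argument and the paper's become the change of variables $[x]_m=\alpha_m([z])$, in which that factor $p^m$ cancels the $p^{-m}$ of $\mu_m$ (your fiber count, or equivalently the Haar Jacobian of $x\mapsto p^mx$ on the paper's route), so both proofs land on the correct formula.
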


\begin{proof}
Propostion~\ref{StateSpaces:Intmoverballs} implies that for any $y$ in $p^{-m}\mathds Z_p$, 
\begin{align}\label{SEPP:EQ:NFirst}
\phi_{\X^m}(y) & = \int_{G_m}\langle-[x]_m, y\rangle_m\rho_{\X^m}([x]_m)\,{\rm d}\mu_m([x]_m)\notag\\
& = \int_{\mathds Q_p} \langle -[x]_m, y\rangle_m\rho_{\X^{m}}([x]_m)  \,{\rm d}\mu(x).
\end{align}
Proposition~\ref{charactonGmrelatestoG} and \eqref{SEPP:EQ:NFirst} together imply that
\begin{align}\label{SEPP:EQ:NSecond}
\phi_{\X^m}(y) & = \int_{\mathds Q_p} \langle -[x], p^my\rangle\rho_{\X^{m}}([x]_m)  \,{\rm d}\mu(x)\notag\\
& = \int_{\mathds Q_p} \langle -[x], p^my\rangle\rho_{\X}([x])  \,{\rm d}\mu(x).
\end{align}
Propostion~\ref{StateSpaces:Intmoverballs} permits the integral in \eqref{SEPP:EQ:NSecond} to be rewritten as an integral over $G$, and so
\begin{align*}
\phi_{\X^m}(y) & = \int_{G} \langle -[x], p^my\rangle\rho_{\X}([x])  \,{\rm d}\mu_0([x])\\
& = \Big(\mathcal F \rho_{\X}\Big)(p^my) = 1 - \alpha|p^my|^b = 1 - \frac{\alpha|y|^b}{p^{mb}},
\end{align*}
where Proposition~\ref{Prop:CharPrim} implies the penultimate equality.
\end{proof}

\renewcommand{\Ell}{\lambda}

Take $\tau$ to be the sequence of time scalings $\(\frac{\sigma}{\alpha}p^{mb}\right)$ and $\iota$ to be the sequence of spatiotemporal embeddings of $G$ into $\mathds Q_p$ with time scaling $\tau$ and spatial embeddings given by the $\Gamma_m$ that \eqref{StateSpaces:Def:Gamma} defines.  As discussed in the previous work \cite{BW}, the relationship between the space and time scalings that guarantee the convergence of the discrete time processes to the Brownian motion in $\mathds Q_p$ is similar in the $p$-adic and real settings, namely, \[\dfrac{\delta_m^b}{\tau_m} \to \frac{\sigma}{\alpha}.\] As should be expected, this relationship continues to hold for more general $b$.   Take $\Ell(m)$ to be the reciprocal of $\tau(m)$ and take $\Y^m$ and $\Z^m$ to be the abstract processes that are for any $t$ in $[0,\infty)$ given by \[\Y^m_t = \Gamma_m(\s_{\floor{t\Ell(m)}})\quad {\rm and} \quad \Z^m_t = \alpha_m(\s_{\floor{t\Ell(m)}}).\]  All probabilities for $\Y^m$ may be recovered from the probabilities for $\Z^m$, since \[\Y^m = j_m\circ \alpha_m\circ\s.\]  For all $t$ in $(0, \infty)$, denote by $\rho_{\Z^m}(t, \cdot)$ and $\phi_{\Z^m}(t, \cdot)$ the probability density function and characteristic function for $\Z^m_t$, respectively. %
Since the Fourier transform takes convolutions to products, Proposition~\ref{ScalLim:Prop:FTmprocess} implies Proposition~\ref{ScalLim:Prop:FTmprocessb}.

\begin{proposition}\label{ScalLim:Prop:FTmprocessb}
For any $y$ in $p^{-m}\mathds Z_p$ and $t$ in $[0,\infty)$, \[\phi_{\Z^m}(t,y) = \left(1 - \frac{\alpha|y|^b}{p^{mb}}\right)^{\floor{t\Ell(m)}}.\]
\end{proposition}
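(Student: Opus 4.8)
The plan is to reduce the statement about $\phi_{\Z^m_t}$ directly to the two facts already assembled just above it: the structure of $\rho_{\Z^m}(t,\cdot)$ as an iterated convolution, and the single-step Fourier transform computed in Proposition~\ref{ScalLim:Prop:FTmprocess}. First I would dispense with the degenerate case in which $\floor{t\Ell(m)}$ equals $0$. Here $\rho_{\Z^m}(t,\cdot)$ is the Dirac measure at the identity of $G_m$, realized as integration against $p^m\mathds 1_{\{0\}}$, so its Fourier transform is the constant function $1$ on $p^{-m}\mathds Z_p$. This matches the claimed formula, since any quantity raised to the zeroth power is $1$; so the empty-product convention makes the degenerate case consistent with the general one.

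For the main case, with $\floor{t\Ell(m)}$ a natural number $n$, the key structural fact (recorded in the text preceding the statement) is that $\rho_{\Z^m}(t,\cdot)$ is the $n$-fold convolution of $\rho_{\X^m}$ with itself on the group $G_m$. The essential analytic input is that the Fourier transform $\mathcal F_m$ on $G_m$ converts convolution into pointwise multiplication, so that
\[
\phi_{\Z^m_t}(y) = \mathcal F_m\big(\underbrace{\rho_{\X^m}\ast\cdots\ast \rho_{\X^m}}_{n\text{-times}}\big)(y) = \big(\mathcal F_m\rho_{\X^m}(y)\big)^n = \big(\phi_{\X^m}(y)\big)^n
\]
for each $y$ in the Pontryagin dual $p^{-m}\mathds Z_p$. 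Substituting the value $\phi_{\X^m}(y) = 1 - \alpha|y|^b/p^{mb}$ from Proposition~\ref{ScalLim:Prop:FTmprocess} and recalling $n = \floor{t\Ell(m)}$ yields exactly the asserted expression. The whole argument is therefore a single clean application of the convolution theorem on the finite-scale group followed by a substitution.

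I do not expect any genuine obstacle, since the convolution theorem for $\mathcal F_m$ on $G_m$ and the explicit value of $\phi_{\X^m}$ are both already in hand. The only point deserving care is a bookkeeping one: verifying that the convolution theorem applies verbatim to $L^2(G_m)$ (or to the relevant space of probability mass functions) with the normalization of $\mu_m$ fixed by \eqref{Eq:PrimRef:IntandSumatm}, so that no stray factor of $p^{\pm m}$ intrudes into the product. Checking this amounts to confirming that $\mathcal F_m$ as defined is multiplicative on convolutions with the chosen Haar measure, which follows from the standard Fourier theory on locally compact abelian groups once the dual pairing $\langle\cdot,\cdot\rangle_m$ and the measure are matched; alternatively, one can track the constants through the single convolution $\rho_{\X^m}\ast\rho_{\X^m}$ and then induct on $n$. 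Either route makes the proof a short, essentially formal computation.
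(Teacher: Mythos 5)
Your proposal is correct and follows essentially the same route as the paper, which dispatches this proposition in one line: since $\rho_{\Z^m}(t,\cdot)$ is the $\floor{t\Ell(m)}$-fold convolution of $\rho_{\X^m}$ and the Fourier transform takes convolutions to products, Proposition~\ref{ScalLim:Prop:FTmprocess} gives the claimed formula immediately. Your extra attention to the degenerate case $\floor{t\Ell(m)}=0$ and to the normalization of $\mu_m$ in the convolution theorem is sound bookkeeping that the paper leaves implicit.
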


Take $E_{m}$ to be the function that is defined for any $(t,y)$ in $[0,\infty)\times\mathds Q_p$ by \[E_m(t, y) = \begin{cases}  
\left(1 - \frac{\alpha|y|^b}{p^{mb}}\right)^{\floor{t\Ell(m)}} &\mbox{if }|y| \leq p^{m}\\0 &\mbox{if }|y| > p^{m}.\end{cases}\]  For any fixed $b$ in $(0, \infty)$, Proposition~\ref{Prim:Prop:Boundonalphaoverpb} implies Proposition~\ref{last:Emlimit}.

\begin{proposition}\label{last:Emlimit}
For any $t$ in $[0, \infty)$, the sequence of functions $(E_m(t, \cdot))$ converges uniformly to ${\rm e}^{-t\sigma|\cdot|^b}$.
\end{proposition}


\subsection{Moment estimates for the embedded processes}

The Kolmogorov Extension Theorem guarantees the existence of a model $(F([0,\infty)\colon\mathds Q_p), \Qbm, Y)$ for $\Y^m$.  Denote by ${\mathds E}_m[\cdot]$ the expected value with respect to $\Qbm$.

\begin{proposition}\label{MainEstimateEmbedded}
There is a constant $C$ such that for any $t$ in $[0, \infty)$, \[{\mathds E}_m\!\left[\big|Y_t\big|^r\right] < Ct^{\frac{r}{b}}.\]
\end{proposition}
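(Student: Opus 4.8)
The plan is to reduce the estimate for the embedded process directly to the moment bound already established for the primitive process in Theorem~\ref{LOM:Theorem:PrimitiveMoments}, exploiting the fact that the spatial embedding $\Gamma_m$ rescales absolute values by exactly $p^{-m}$ and that the time scaling $\Ell(m)$ grows like $p^{mb}$. The decisive observation is that these two factors are tuned so that their contributions to the $r$-th moment cancel, leaving a bound that is uniform in $m$.

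First I would fix the parameter $r$ in $(0,b)$ and abbreviate $n = \floor{t\Ell(m)}$, so that by definition $\Y^m_t = \Gamma_m(\s_n)$. Since $\s_n = \X_0 + \X_1 + \cdots + \X_n$ is a sum of elements of $G$, the final proposition of Section~\ref{Sec-StateSpaces} gives the pathwise identity $|\Y^m_t| = p^{-m}|\s_n|$, and passing to the model and taking expectations yields
\[{\rm E}\big[|\Y^m_t|^r\big] = p^{-mr}\,{\rm E}\big[|S_n|^r\big].\]
If $\floor{t\Ell(m)} = 0$ then $S_0 = \X_0$ is almost surely $0$, so the left-hand side vanishes and the asserted bound holds trivially; otherwise $n \geq 1$ and Theorem~\ref{LOM:Theorem:PrimitiveMoments} supplies a constant $K$ with ${\rm E}[|S_n|^r] < Kn^{r/b}$.

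Next I would insert the time scaling. Because $r/b > 0$, the elementary bound $n = \floor{t\Ell(m)} \leq t\Ell(m)$ gives $n^{r/b} \leq (t\Ell(m))^{r/b}$, and substituting $\Ell(m) = \frac{\sigma}{\alpha}p^{mb}$ from Proposition~\ref{last:Emlimit} produces
\[{\rm E}\big[|\Y^m_t|^r\big] < p^{-mr}\,K\,\Big(t\,\frac{\sigma}{\alpha}\,p^{mb}\Big)^{r/b} = K\Big(\frac{\sigma}{\alpha}\Big)^{r/b} p^{-mr}\,p^{mr}\, t^{r/b} = K\Big(\frac{\sigma}{\alpha}\Big)^{r/b} t^{r/b}.\]
The factor $p^{-mr}$ arising from the spatial rescaling exactly cancels the factor $p^{mb\cdot r/b} = p^{mr}$ arising from $\Ell(m)^{r/b}$, so that $C = K(\sigma/\alpha)^{r/b}$ serves as a constant independent of both $t$ and $m$; this uniformity in $m$ is precisely what the tightness argument will later require.

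There is essentially no analytic obstacle left to overcome here, since all of the genuine difficulty is absorbed into Theorem~\ref{LOM:Theorem:PrimitiveMoments}; the proof is a short scaling computation. The only two points that demand care are the degenerate case $\floor{t\Ell(m)} = 0$, handled separately above, and keeping the direction of the time scale straight, namely that $\Ell = \tfrac{1}{\tau}$ forces $\Ell(m) = \frac{\sigma}{\alpha}p^{mb}$ to \emph{grow} in $m$, which is exactly the growth needed to offset the shrinking factor $p^{-mr}$.
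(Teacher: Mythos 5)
Your proof is correct and follows essentially the same route as the paper's: reduce to the primitive-process bound of Theorem~\ref{LOM:Theorem:PrimitiveMoments} via the scaling identity $|\Y^m_t| = p^{-m}|\s_{\floor{t\Ell(m)}}|$, bound the floor by $t\Ell(m)$, and observe that $p^{-mr}$ cancels $(p^{mb})^{r/b}$, yielding $C = K(\sigma/\alpha)^{r/b}$. The paper handles the degenerate case identically (when $t\Ell(m) < 1$ the expectation is zero), so there is nothing to add.
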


\begin{proof}
For any $t$ small enough so that $t\Ell(m)$ is in $[0,1)$, ${\mathds E}_m\!\left[\big|Y_t\big|^r\right]$ is equal to $0$, which verifies the inequality.  Denote by $K$ the constant appearing in Proposition~\ref{LOM:Theorem:PrimitiveMoments}.  If $t\Ell(m)$ is in $[1, \infty)$, then
\eqref{3:eqn:cosmetic} implies that %
\begin{align*}
{\mathds E}_m\!\left[\big|Y_t\big|^r\right] &= \sum_{k\in \mathds N_0}p^{kr}{\rm Prob}\(\big|\alpha_m\circ\s_{\floor*{t\Ell(m)}}\big|=p^k\)\\ &= \sum_{k\in \mathds N_0}p^{(k-m)r}{\rm Prob}\(\big|\s_{\floor*{t\Ell(m)}}\big|=p^k\)\\& = p^{-rm}{\mathds E}_\ast\!\left[\big|\s_{\floor*{t\Ell(m)}}\big|^r\right] \\&< Kp^{-rm}\floor*{t\Ell(m)}^{\frac{r}{b}} \leq K\(\tfrac{\sigma}{\alpha}\)^{\frac{r}{b}}  t^{\frac{r}{b}},
\end{align*} 
where Proposition~\ref{LOM:Theorem:PrimitiveMoments} implies the penultimate inequality.
\end{proof}

\begin{proposition}\label{tight}
There is a sequence of measures $(\Pbm)$ on $D([0,\infty)\colon \mathds Q_p)$ so that for each $m$ in $\mathds N_0$, the triple $(D([0,\infty)\colon \mathds Q_p), \Pbm, Y)$ is a model for $\Y^m$.  Furthermore, the sequence $(\Pbm)$ is a sequence of uniformly tight probability measures.
\end{proposition}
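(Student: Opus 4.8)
\emph{Constructing the models.} The plan is to build each $P^m$ explicitly rather than to invoke the Kolmogorov extension theorem and a separate regularization. Starting from the model $(F(\mathds N_0\colon G), P_0, S)$ of the primitive process produced in Section~\ref{Sec:PrimProcess}, I would push $S$ forward under the map $\omega \mapsto \big(t \mapsto \Gamma_m(S_{\floor{t\Ell(m)}}(\omega))\big)$. For each fixed $\omega$ this path is a right-continuous step function with left limits, since $t\mapsto\floor{t\Ell(m)}$ is, so it lies in $D([0,\infty)\colon \mathds Q_p)$ and is valued in the grid $\Gamma_m(G)$. Defining $P^m$ to be the law of this step-function process gives a measure on $D([0,\infty)\colon \mathds Q_p)$ whose finite-dimensional distributions agree, through \eqref{PremeasureFromAbstract}, with those of $\Y^m$; hence $(D([0,\infty)\colon \mathds Q_p), P^m, Y)$ is a model for $\Y^m$ and the c\`adl\`ag requirement is met automatically.

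\emph{Reduction of tightness to a moment condition.} For uniform tightness I would use the criterion of \cite{bil1, cent}: it suffices that for each fixed $t$ the laws of $Y_t$ be tight uniformly in $m$, and that a two-increment (Chentsov type) moment bound hold. The fixed-time tightness is immediate from Proposition~\ref{MainEstimateEmbedded}, since the balls of $\mathds Q_p$ are compact and Markov's inequality gives $\Prob(|\Y^m_t| > p^k) \le p^{-kr}\,{\rm E}[|\Y^m_t|^r] < p^{-kr}Ct^{\frac{r}{b}}$, which tends to $0$ as $k\to\infty$ uniformly in $m$. The substance is the increment estimate. I would first upgrade Proposition~\ref{MainEstimateEmbedded} to increments: because $\Gamma_m = p^mj_0$ and $j_0$ carries the absolute value on $G$ to that of $\mathds Q_p$, a short ultrametric computation on differences of grid points yields $|\Y^m_t - \Y^m_s| = p^{-m}\big|\s_{\floor{t\Ell(m)}} - \s_{\floor{s\Ell(m)}}\big|$, and the displayed increment is a sum of $N = \floor{t\Ell(m)} - \floor{s\Ell(m)}$ of the independent identically distributed $X_i$, hence has the law of $\s_N$. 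Theorem~\ref{LOM:Theorem:PrimitiveMoments} then gives ${\rm E}[|\Y^m_t - \Y^m_s|^r] < K p^{-rm} N^{\frac{r}{b}}$ with $K$ independent of both $N$ and $m$.

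\emph{The two-increment bound.} The decisive step is to combine this with the independence of increments over disjoint time intervals. For an epoch $(t_1, t, t_2)$ put $N_1 = \floor{t\Ell(m)} - \floor{t_1\Ell(m)}$ and $N_2 = \floor{t_2\Ell(m)} - \floor{t\Ell(m)}$; the two increments are functions of disjoint blocks of the $X_i$, so
\[{\rm E}\big[|\Y^m_t - \Y^m_{t_1}|^r|\Y^m_{t_2} - \Y^m_t|^r\big] = {\rm E}\big[|\Y^m_t - \Y^m_{t_1}|^r\big]\,{\rm E}\big[|\Y^m_{t_2} - \Y^m_t|^r\big] < K^2 p^{-2rm}(N_1N_2)^{\frac{r}{b}}.\]
If either $N_i$ vanishes the product is $0$; otherwise $N_1+N_2 \ge 2$, so $N_1 + N_2 \le (t_2-t_1)\Ell(m)+1 \le 2(t_2-t_1)\Ell(m)$ and hence $N_1N_2 \le (t_2-t_1)^2\Ell(m)^2$. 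Since $\Ell(m) = \frac{\sigma}{\alpha}p^{mb}$ by Proposition~\ref{last:Emlimit}, the combination $p^{-2rm}\Ell(m)^{\frac{2r}{b}} = (\sigma/\alpha)^{\frac{2r}{b}}$ is independent of $m$, giving the uniform bound ${\rm E}[|\Y^m_t - \Y^m_{t_1}|^r|\Y^m_{t_2} - \Y^m_t|^r] \le C'(t_2-t_1)^{\frac{2r}{b}}$. This is exactly the Chentsov condition on every compact time interval, hence on $[0,\infty)$, provided $\frac{2r}{b} > 1$, which I would secure by choosing $r \in (\frac{b}{2}, b)$, an interval that is nonempty for every positive $b$ and on which Theorem~\ref{LOM:Theorem:PrimitiveMoments} applies.

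\emph{The main obstacle.} I expect the genuine difficulty to be the two-increment estimate rather than the single-increment one: a one-point bound ${\rm E}[|\Y^m_t - \Y^m_s|^r] \le C(t-s)^{\theta}$ with $\theta>0$ simply fails for these step processes, because a single jump produces an increment of order one over an arbitrarily short time. The product form rescues the argument, since requiring both increments to be nonzero forces at least two jumps and therefore a macroscopic separation $t_2-t_1 \gtrsim 1/\Ell(m)$; this is the $p$-adic counterpart of Proposition~\ref{prop:real:centesta}. The one point needing care is the exact cancellation of the powers of $p$ under the critical scaling $p^{-mb}\,\Ell(m) = \sigma/\alpha$, which is precisely what renders $C'$ independent of $m$ and thereby converts the Chentsov condition into the \emph{uniform} tightness of $(P^m)$ via \cite{bil1, cent}.
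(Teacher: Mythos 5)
Your proposal is correct and follows the same overall strategy as the paper's proof: factor the two-increment moment using independence of the increments, control each factor through the primitive moment bound of Theorem~\ref{LOM:Theorem:PrimitiveMoments}, take $r \in \left(\frac{b}{2}, b\right)$ so that $\frac{2r}{b} > 1$, and invoke the Chentsov--Billingsley criterion. It differs in two places, both to your advantage. First, the paper obtains the model from the Kolmogorov extension theorem on $F([0,\infty)\colon \mathds Q_p)$ and then passes to a c\`adl\`ag version, whereas you construct $P^m$ directly as the pushforward of $P_0$ under the step-function map $\omega \mapsto \big(t\mapsto \Gamma_m(S_{\floor{t\Ell(m)}}(\omega))\big)$; this is cleaner, since measurability with respect to the cylinder $\sigma$-algebra of $D([0,\infty)\colon \mathds Q_p)$ is immediate and the paths are manifestly grid-valued step functions (it also essentially re-proves Proposition~\ref{prop:qp:concenration} in passing). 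Second, and more substantively, the paper's displayed computation bounds each single-increment moment by $C(t_{i+1}-t_i)^{\frac{r}{b}}$, citing Proposition~\ref{MainEstimateEmbedded}; but that proposition concerns $\Y^m_t = \Y^m_t - \Y^m_0$, where the step count $\floor{t\Ell(m)}$ never exceeds $t\Ell(m)$. For an interior increment the step count $N = \floor{t_{i+1}\Ell(m)} - \floor{t_i\Ell(m)}$ can exceed $(t_{i+1}-t_i)\Ell(m)$ by one, and in the regime $(t_{i+1}-t_i)\Ell(m) < 1$ with $N = 1$ the single-increment H\"older bound simply fails --- exactly the ``one jump in an arbitrarily short time'' obstruction you identify. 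Your case analysis (either one factor vanishes, or both $N_i \geq 1$, which forces $(t_2-t_1)\Ell(m) \geq 1$ and hence $N_1N_2 \leq \big((t_2-t_1)\Ell(m)\big)^2$), combined with the exact cancellation $p^{-2rm}\Ell(m)^{\frac{2r}{b}} = (\sigma/\alpha)^{\frac{2r}{b}}$, is the correct repair and yields the bound uniformly in $m$. So your proof is not merely the same in spirit; at the one delicate point of the argument it is more careful than the paper's own.
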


\begin{proof}
 For any strictly increasing finite sequence $(t_1, t_2, t_3)$ in $[0, \infty)$, the independence of the increments of $\Y^m$ implies that 
\begin{align*}
{\mathds E}_m\!\left[\big|Y_{t_3} - Y_{t_2}\big|^r\big|Y_{t_2} - Y_{t_1}\big|^r\right] & = {\mathds E}_m\!\left[\big|Y_{t_3} - Y_{t_2}\big|^r\right]{\mathds E}_m\!\left[\big|Y_{t_2} - Y_{t_1}\big|^r\right]\\ & \leq C(t_3-t_2)^\frac{r}{b}C(t_2-t_1)^\frac{r}{b} \leq C^2(t_3-t_1)^{\frac{2r}{b}}.
\end{align*}
Take $r$ to be in $\left(\frac{b}{2}, b\right)$ to verify that $(F([0,\infty)\colon\mathds Q_p), \Qbm, Y)$ satisfies Chentsov's criterion with constant $C$ independent of both $m$ and the choice of $(t_1, t_2, t_3)$ \cite{cent}.  
\end{proof}


\subsection{Convergence of the processes}

The sequence $(D([0,\infty)\colon \mathds Q_p), \Pbm, Y)$ of stochastic processes describe random walks on the embedded grids $\Gamma_m(G)$ in $\mathds Q_p$, with jumps that occur only at time points that are positive multiples of $\tau_m$.  Proposition~\ref{prop:qp:concenration} makes this idea precise.

\begin{proposition}\label{prop:qp:concenration}
The measure $\Pbm$ is concentrated on the subset of paths in $D([0, \infty)\colon \mathds Q_p)$ that are valued in $\Gamma_m(G)$ and are, for each natural number $n$, constant on the intervals $\big[(n-1)\tau_m, n\tau_m\big)$.
\end{proposition}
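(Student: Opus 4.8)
The plan is to exhibit a $P^m$-full-measure subset of $D([0,\infty)\colon \mathds Q_p)$ on which both stated properties hold, exploiting that the abstract process $\Y^m$ is, by its definition $\Y^m_t = \Gamma_m(\s_{\floor{t\Ell(m)}})$, a deterministic function of $t$ through the single integer $\floor{t\Ell(m)} = \floor{t/\tau_m}$. Since Proposition~\ref{tight} guarantees that $(D([0,\infty)\colon \mathds Q_p), P^m, Y)$ is a model for $\Y^m$, the finite dimensional distributions of $P^m$ agree with those of $\Y^m$, and it is only these distributions that the argument will use.

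First I would record the key algebraic observation. For any $n$ in $\mathds N$ and any $s$ in $I_n = [(n-1)\tau_m, n\tau_m)$ one has $s/\tau_m \in [n-1, n)$, so $\floor{s\Ell(m)} = n-1$ and hence $\Y^m_s = \Gamma_m(\s_{n-1}) = \Y^m_{(n-1)\tau_m}$ as abstract random variables. Thus the two-dimensional distribution of $\Y^m$ at the pair $\big(s, (n-1)\tau_m\big)$ is concentrated on the diagonal, and $\Y^m_{(n-1)\tau_m}$ is $\Gamma_m(G)$-valued because $\s_{n-1}$ is $G$-valued. Transporting these statements to the model via Proposition~\ref{tight} yields, for the random variables $Y$ of \eqref{First:DefofY}, the two equalities $P^m\big(\omega(s) = \omega((n-1)\tau_m)\big) = 1$ and $P^m\big(\omega((n-1)\tau_m) \in \Gamma_m(G)\big) = 1$.

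Next I would reduce the uncountably many constraints to countably many using c\`{a}dl\`{a}g regularity. Fix a countable set $T_0 \subseteq [0,\infty)$ that contains every left endpoint $(n-1)\tau_m$ and is dense in each $I_n$, and let $\Omega_0$ be the intersection, over all $n$ in $\mathds N$ and all $s$ in $T_0 \cap I_n$, of the events $\{\omega(s) = \omega((n-1)\tau_m)\}$ together with the events $\{\omega((n-1)\tau_m) \in \Gamma_m(G)\}$. Each of these events is determined by evaluation at finitely many times, hence is a measurable cylinder set of $P^m$-measure one by the previous paragraph, so the countable intersection $\Omega_0$ again has $P^m$-measure one. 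It then remains to verify that every path in $\Omega_0$ has both desired properties. Given $\omega \in \Omega_0$ and $t \in I_n$, choose a sequence in $T_0 \cap I_n$ strictly decreasing to $t$, which is possible because $I_n$ is right-open and $T_0$ is dense in $I_n$; right-continuity of $\omega$ then forces $\omega(t) = \omega((n-1)\tau_m)$, so $\omega$ is constant on $I_n$, and since $\omega((n-1)\tau_m) \in \Gamma_m(G)$ its common value on $I_n$ lies in $\Gamma_m(G)$. As the $I_n$ cover $[0,\infty)$, every path in $\Omega_0$ is valued in $\Gamma_m(G)$ throughout, so $P^m$ is concentrated on the asserted set.

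The main obstacle, and the only place where the Skorohod structure genuinely enters, is this last passage from equality on a countable dense set to honest constancy on the whole interval: the proposition asserts constancy for \emph{all} $t$, whereas $P^m$ controls behavior only at finitely many times simultaneously. Right-continuity of c\`{a}dl\`{a}g paths is precisely the regularity that closes this gap, and one must take the approximating times strictly to the right of $t$ inside the right-open interval $I_n$ so that right-continuity, rather than the possibly discontinuous left limit at $(n-1)\tau_m$, is what applies.
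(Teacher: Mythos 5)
Your proof is correct and follows essentially the same route as the paper's: both reduce the claim to finite-dimensional almost-sure events (the floor function $\floor{s/\tau_m}$ being constant on each $I_n$ forces diagonal concentration of the joint laws, and the marginals are $\Gamma_m(G)$-valued), pass to countably many constraints at rational/dense times, and then invoke right-continuity of c\`{a}dl\`{a}g paths to upgrade to all times. The only cosmetic difference is that you anchor the constancy at the left endpoint $(n-1)\tau_m$ while the paper compares pairs of times within the interval; the substance is identical.
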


\begin{proof}
For any strictly increasing finite sequence $(t_1, \dots, t_k)$ in $[0,\infty)$, paths in $D([0, \infty)\colon \mathds Q_p)$ are $\Pbm$-almost surely in $\Gamma_m(G)$ at any given place of $(t_1, \dots, t_k)$.  Finite intersections of almost sure events are almost sure and so \[\Pbm\big(\{\omega \in D([0,\infty)\colon \mathds Q_p)\colon \omega(t_i) \in \Gamma_m(G), 1 \leq i \leq k\}\big) = 1.\]  For any sequence $(A_i)$ of strictly increasing nested finite subsets of $\mathds Q$ whose union is $\mathds{Q}$, continuity from above of $\Pbm$ implies that %
\begin{align*}
&\Pbm\big(\{\omega\in D([0,\infty)\colon \mathds Q_p)\colon \omega(s) \in \Gamma_m(G), \forall s \in \mathds{Q}\}\big)\\ &\qquad = \lim_{i \rightarrow \infty} \Pbm\big(\{\omega\in D([0,\infty)\colon \mathds R)\colon \omega(s) \in \Gamma_m(G), \forall s \in A_i\}\big) = 1.
\end{align*}%
The right continuity of the paths implies that the set of $\Gamma_m(G)$-valued paths has full measure with respect to the measure $\Pbm$.  

For any interval $I$ in $[0,\infty)$ that does not intersect the set $\tau_m\mathds N$, take $(V_i)$ to be a nested sequence of finite subsets of $I$ whose union is $I\cap \mathds Q$.  For any pair of real numbers $s_1$ and $s_2$ in $I$, $\floor*{s_1\Ell(m)}$ is equal to $\floor*{s_2\Ell(m)}$ and so \begin{align*} \Pbm\big(Y_{s_1} - Y_{s_2} = 0\big) & = {\rm Prob}\big(\Y_{s_1}^m - \Y_{s_2}^m = 0\big) \\&= {\rm Prob}\big(\s_{\floor*{s_1\Ell(m)}} - \s_{\floor*{s_2\Ell(m)}} = 0\big) = 1.\end{align*}  Continuity from above of the measure $\Pbm$ implies that for any $t$ in $I$, \begin{align*}&\Pbm\big(\left\{\omega\in D([0,\infty)\colon \mathds Q_p)\colon |\omega(t) - \omega(s)| = 0, \quad \forall s \in I\cap \mathds Q\right\}\big) \\&= \lim_{i\to \infty} \Pbm\big(\left\{\omega\in D([0,\infty)\colon \mathds Q_p)\colon |\omega(t) - \omega(s)| = 0, \quad \forall s \in I\cap V_i\right\}\big) = 1.\end{align*}  The right continuity of the paths implies that \[\Pbm\big(\left\{\omega\in D([0,\infty)\colon \mathds Q_p)\colon |\omega(t) - \omega(s)| = 0, \quad \forall s, t \in I\right\}\big) = 1.\]  Take $(I_n)$ to be a sequence of disjoint intervals that do not intersect $\tau_m\mathds N$ and whose union is $\mathds Q\setminus \tau_m\mathds N$.  Continuity from above of the measure $\Pbm$ implies that \[\Pbm\Big(\bigcap_{n\in \mathds N}\left\{\omega\in D([0,\infty)\colon \mathds Q_p)\colon |\omega(t) - \omega(s)| = 0, \quad \forall s,t \in I_n\right\}\Big) = 1.\]%
\end{proof}

\begin{definition}
The set $H_R$ of all \emph{restricted histories} for paths in $\mathds Q_p$ is the set of all histories in $\mathds Q_p$ whose route is a finite sequence of balls.
\end{definition}

\begin{proposition}\label{5:prop:restrictedhistconv}
For any restricted history $h$ in $H_R$, \[\Pbm(\C(h)) \to \Pb(\C(h)).\]
\end{proposition}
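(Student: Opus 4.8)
The plan is to extend the single-time convergence of Proposition~\ref{ScalLim:Prop:} to the joint distribution encoded by an arbitrary restricted history $h$. Write $h = \big((0, B_0), (t_1, B_1), \dots, (t_k, B_k)\big)$ with each $B_i$ a ball in $\mathds Q_p$, and recall from \eqref{PremeasureFromAbstract} that $P^m(\C(h))$ is a finite sum over lattice points of products of one-step transition probabilities for the embedded process $\Y^m$. The key structural fact is that both $P^m$ and the limiting $p$-adic Brownian motion $P$ have \emph{independent increments}: for $P^m$ this is the content of Section~\ref{Sec:PrimProcess}, where $S$ is a sum of i.i.d.\ increments, and for $P$ it follows from the convolution semigroup property of the density $\rho$ in \eqref{ScalLim:Equation:Qppdf}. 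Consequently both probabilities factor as products over the $k$ consecutive time gaps.

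First I would reduce to convergence of each single increment transition probability. Using the Markov/independent-increment structure, I would write
\begin{align*}
P^m(\C(h)) &= \sum_{x_0 \in [B_0]_m}\cdots\sum_{x_k \in [B_k]_m} \prod_{i=1}^{k}{\rm Prob}\big(\s_{\floor{t_i\Ell(m)}} - \s_{\floor{t_{i-1}\Ell(m)}} = x_i - x_{i-1}\big),
\end{align*}
and recognize, exactly as in Proposition~\ref{ScalLim:Prop:}, that summing a transition probability over a ball and applying Proposition~\ref{StateSpaces:Intmoverballs} converts each factor into an integral over $B_i$ against a Fourier-inverted kernel. The increment from $t_{i-1}$ to $t_i$ has characteristic function $E_m(t_i - t_{i-1}, \cdot)$ up to the floor discrepancy, so after unwinding the sums I would express $P^m(\C(h))$ as an integral over $B_0 \times \cdots \times B_k$ of a product of $k$ kernels of the form $\int_{\mathds Q_p}\chi((x_i - x_{i-1})y)E_m(t_i - t_{i-1}, y)\,{\rm d}\mu(y)$.

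The analytic heart is then an appeal to Proposition~\ref{last:Emlimit}: each $E_m(t_i - t_{i-1}, y)$ converges to ${\rm e}^{-\sigma(t_i - t_{i-1})|y|^b}$ uniformly in $y$, and the inverse Fourier transform of each factor converges uniformly in the spatial variable, so the integrand converges uniformly on the compact product of balls $B_0 \times \cdots \times B_k$. One subtlety is that the floor functions $\floor{t_i\Ell(m)}$ only approximate $t_i\Ell(m)$, but since the per-step factor differs from the target exponent by at most one power of $(1 - \alpha|y|^b p^{-mb})$, which tends uniformly to $1$, this discrepancy vanishes and can be folded into the uniform convergence statement. With uniform convergence of the integrand on a set of finite measure, I would pass the limit inside the integral to obtain the product of $p$-adic heat kernels integrated over the $B_i$, which is precisely $P(\C(h))$ by the independent-increment factorization of $P$.

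The main obstacle is bookkeeping rather than a genuinely new estimate: one must correctly reindex the nested sums into a telescoping product of increment probabilities and verify that the change of variables to increment coordinates $x_i - x_{i-1}$ respects the ball structure so that Proposition~\ref{StateSpaces:Intmoverballs} applies to each factor. Since intersections of balls are balls (noted just before the statement), the domains remain balls of radius at least $p^{-m}$ for $m$ large, so the hypothesis of Proposition~\ref{StateSpaces:Intmoverballs} is met for each factor once $m$ exceeds a threshold determined by the finitely many radii of $B_0, \dots, B_k$. Everything else is a direct multivariable generalization of the single-time argument already carried out in Proposition~\ref{ScalLim:Prop:}, with uniform convergence on a compact product replacing uniform convergence on a single ball.
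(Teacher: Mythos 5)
Your proposal is correct and takes essentially the same route as the paper: factor $P^m(\C(h))$ over the independent increments, use Proposition~\ref{StateSpaces:Intmoverballs} to convert the lattice sums over the balls into integrals, write each increment density as the inverse Fourier transform of $E_m(t_i-t_{i-1},\cdot)$, and pass to the limit using the uniform convergence of these kernels to the $p$-adic heat kernels, exactly as in the multivariable extension of Proposition~\ref{ScalLim:Prop:} that the paper carries out. The only quibbles are that your displayed sum omits the initial-distribution factor ${\rm Prob}(\s_0 = x_0)$ from \eqref{PremeasureFromAbstract} (which collapses the $x_0$-sum to the single point $[0]$, as the paper encodes by taking the initial route entry to be $\{0\}$), while on the other hand your explicit handling of the floor-function discrepancy between $\floor{t_i\Ell(m)}-\floor{t_{i-1}\Ell(m)}$ and $\floor{(t_i-t_{i-1})\Ell(m)}$ addresses a point the paper passes over silently.
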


\begin{proof} %
For any $(t,x)$ in $(0,\infty)\times \mathds Q_p$,
\begin{align*}
 \rho_{\Z^m}(t, [x]_m) &= \int_{p^{-m}\mathds Z_p} \langle [x]_m,y\rangle \left(1 - \frac{\alpha|y|^b}{p^{mb}}\right)^{\floor{t\ell(m)}}{\rm d}\mu(y)
\\ & = \int_{\mathds Q_p} \chi(xy) E_m(t,y)\,{\rm d}\mu(y).
\end{align*} %
Denote by $F_m$ the function that takes any $x$ in $\mathds Q_p$ to $E_m(t,x)\mathds 1_{\BB_{m-1}(0)}(x)$, and denote by $A_m$ the integral of $|E_m(t,\cdot)|$ over $\SS_m(0)$. The bounds given by Proposition~\ref{Prim:Prop:Boundonalphaoverpb} imply that $(A_m)$ is a positive null sequence and that the sequence of functions $(F_m)$ increases to ${\rm e}^{-t\sigma|\cdot|^b}$.  There is, therefore, a positive null sequence $(\varepsilon_k)$ so that for any $m$, \begin{equation}\label{ConPross:Eq:ResHis:seq}\int_{\BB_k(0)^c}|E_m(t, \cdot)|\,{\rm d}\mu(x) < \varepsilon_k.\end{equation} Proposition~\ref{last:Emlimit} and \eqref{ConPross:Eq:ResHis:seq} together imply that for any positive $t$, 
 \begin{align*}
\left|\rho_{\Z^m}(t, [x]_m) - \rho_Y(t, x)\right| & \leq \int_{\mathds Q_p} \left|\chi(xy) E_m(t,y) - \chi(xy){\rm e}^{-\sigma t|y|^b}\right|\,{\rm d}\mu(y)\\& = \int_{\mathds Q_p} \left|E_m(t,y) - {\rm e}^{-\sigma t|y|^b}\right|\,{\rm d}\mu(y) \to 0,
\end{align*}
and so $(\rho_{\Z^m}(t, [\cdot]_m))$ converges uniformly on $\mathds Q_p$ to $\rho_{Y}(t, \cdot)$.

It is enough to suppose that $U_h(0)$ is the set $\{0\}$.  Simplify the notation by writing \[e_h = (t_0, \dots, t_k) \quad {\rm and}\quad U_h = (\{0\}, U_1, \dots, U_k).\]  As long as $m$ is large enough so that $p^{-m}$ is less than the radius of any ball in $U_h$,  
\begin{align*}
\Pbm(\C(h)) &= \Pbm(Y_{t_1}\in U_1, Y_{t_2}\in U_2, \dots, Y_{t_n}\in U_n)\\
&= \int_{U_1} \cdots \int_{U_k} \prod_{i\in\{1, \dots, k\}}\rho_{\Z^m}(t_i-t_{i-1}, [x_i] -[x_{i-1}])\,{\rm d}\mu(x_k)\cdots {\rm d}\mu(x_1)\\
&\to  \int_{U_1}\cdots \int_{U_k} \prod_{i\in\{1, \dots, k\}}\rho_{Y}(t_i-t_{i-1}, x_i-x_{i-1})\,{\rm d}\mu(x_k)\cdots {\rm d}\mu(x_1) = \Pb(\C(h)),
\end{align*}
since the integrand that is indexed by $m$ is uniformly convergent as a product of bounded, uniformly convergent functions.
\end{proof}

\begin{theorem}\label{secScalingLimit:Theorem:MAIN}
The sequence of measures $(\Pbm)$ converges weakly to $\Pb$.
\end{theorem}

\begin{proof}
Proposition~\ref{tight} implies that the family measures $\{\Pbm\colon m\in\mathds N_0\}$ is uniformly tight.  Since the intersection of any two balls in $\mathds Q_p$ is again a ball in $\mathds Q_p$, $\C(H_{R})$ is a $\pi$-system that generates the $\sigma$-algebra of cylinder sets.  Proposition~\ref{5:prop:restrictedhistconv} implies that for any $A$ in $\C(H_{R})$, $(\Pbm(A))$ converges to $\Pb(A)$.   The uniform tightness together with the convergence of the measures on a $\pi$-system that generates the cylinder sets implies the weak convergence of the $\Pbm$ to $\Pb$.
\end{proof}

The convergence demonstrated here is, for any $b$ in $(0, \infty)$, the weak convergence of probability measures on $D([0,\infty)\colon \mathds Q_p)$. 
In contrast, the convergence established earlier \cite{BW} was, for any fixed positive $T$ and $b$ in $(1, \infty)$, the weak convergence of probability measures on $D([0,T]\colon \mathds Q_p)$.  The current proof is quite robust and it should extend to the full generality of the non-Archimedean Brownian motion that Varadarajan introduced \cite{var97}.


\end{document}